\documentclass{amsart}%
\usepackage{amssymb}
\usepackage{amsmath}
\usepackage{amsfonts}
\usepackage{graphicx}%
\setcounter{MaxMatrixCols}{30}
\providecommand{\U}[1]{\protect\rule{.1in}{.1in}}
\newtheorem{theorem}{Theorem}
\theoremstyle{plain}

\newtheorem{corollary}{Corollary}

\newtheorem{definition}{Definition}

\newtheorem{lemma}{Lemma}

\newtheorem{proposition}{Proposition}
\newtheorem{remark}{Remark}

\numberwithin{equation}{section}
\begin{document}
\title[Young's Inequality]{An Extension of Young's Inequality}
\author{Flavia-Corina Mitroi}
\address{University of Craiova, Department of Mathematics, Street A. I. Cuza 13,
Craiova, RO-200585, Romania}
\email{fcmitroi@yahoo.com}
\author{Constantin P. Niculescu}
\address{University of Craiova, Department of Mathematics, Street A. I. Cuza 13,
Craiova, RO-200585, Romania}
\email{cpniculescu@gmail.com}
\thanks{Corresponding author: Constantin P. Niculescu}
\date{June 2011}
\subjclass[2000]{Primary 26A51, 26D15; Secondary 90B06}
\keywords{Young's inequality, Legendre duality, convex function.}

\begin{abstract}
Young's inequality is extended to the context of absolutely continuous
measures. Several applications are included.

\end{abstract}
\maketitle

\section{Introduction}

Young's inequality \cite{Y1912} asserts that every strictly increasing
continuous function $f:\left[  0,\infty\right)  \longrightarrow\left[
0,\infty\right)  $ with $f\left(  0\right)  =0$ and $\underset{x\rightarrow
\infty}{\lim}f\left(  x\right)  =\infty$ verifies an inequality of the
following form,%
\begin{equation}
ab\leq\int_{0}^{a}f\left(  x\right)  dx+\int_{0}^{b}f^{-1}\left(  y\right)
dy, \label{youngineq}%
\end{equation}
whenever $a$ and $b\ $are nonnegative real numbers. The equality occurs if and
only if $f\left(  a\right)  =b$. See \cite{HLP}, \cite{Mit}, \cite{NP2006} and
\cite{RV} for details and significant applications.

Several questions arise naturally in connection with this classical result.

\begin{enumerate}
\item[(Q1):] Is the restriction on strict monotonicity (or on continuity)
really necessary?

\item[(Q2):] Is there any weighted analogue of Young's inequality?

\item[(Q3):] Can Young's inequality be improved?
\end{enumerate}

F. Cunningham Jr. and N. Grossman \cite{CG1971} noticed that the question (Q1)
has a positive answer (correcting the prevalent belief that Young's inequality
is the business of strictly increasing continuous functions). The aim of the
present paper is to extend the entire discussion to the framework of locally
absolutely continuous measures and to prove several improvements.

As well known, Young's inequality is an illustration of the Legendre duality.
Precisely, the functions%
\[
F(a)=\int_{0}^{a}f\left(  x\right)  dx\text{ and }G(b)=\int_{0}^{b}%
f^{-1}\left(  x\right)  dx,
\]
are both continuous and convex on $\left[  0,\infty\right)  $ and
(\ref{youngineq}) can be restated as%
\begin{equation}
ab\leq F(a)+G(b)\text{\quad for all }a,b\in\left[  0,\infty\right)  ,
\label{youngineq2}%
\end{equation}
with equality if and only if $f\left(  a\right)  =b.$ Because of the equality
case, the formula (\ref{youngineq2}) leads to the following connection between
the functions $F$ and $G:$%
\begin{equation}
F(a)=\sup\left\{  ab-G(b):b\geq0\right\}  \label{defF}%
\end{equation}
and
\[
G(b)=\sup\left\{  ab-F(a):a\geq0\right\}  .
\]

It turns out that each of these formulas produces a convex function (possibly
on a different interval). Some details are in order.

By definition, the \emph{conjugate} of a convex function $F$ defined on a
nondegenerate interval $I$ is the function%
\[
F^{\ast}:I^{\ast}\rightarrow\mathbb{R},\text{\quad}F^{\ast}(y)=\sup\left\{
xy-F(x):x\in I\right\}  ,
\]
with domain $I^{\ast}=\left\{  y\in\mathbb{R}:F^{\ast}(y)<\infty\right\}  $.
Necessarily $I^{\ast}$ is an non-empty interval and $F^{\ast}$ is a convex
function whose level sets $\left\{  y:F^{\ast}(y)\leq\lambda\right\}  $ are
closed subsets of $\mathbb{R}$ for each $\lambda\in\mathbb{R}$ (usually such
functions are called \emph{closed} convex functions).

A convex function may not be differentiable, but it admits a good substitute
for differentiability.

The \emph{subdifferential\ }of a real function\emph{\ }$F$ defined on an
interval $I$ is a multivalued function $\partial F:I\rightarrow\mathcal{P}%
(\mathbb{R})$ defined by%
\[
\partial F(x)=\left\{  \lambda\in\mathbb{R}:F(y)\geq F(x)+\lambda(y-x)\text{,
for every}\,\,y\in I\right\}  .
\]

Geometrically, the subdifferential gives us the slopes of the supporting lines
for the graph of $F$. The sub\allowbreak differential at a point\emph{\ }is
always a convex set, possibly empty, but the convex functions $F:I\rightarrow
\mathbb{R}$ have the remarkable property that $\partial F(x)\neq\emptyset$ at
all interior points. It is worth noticing that $\partial F(x)=\left\{
F^{\prime}(x)\right\}  $ at each point where $F$ is differentiable (so this
formula works for all points of $I$ except for a countable subset). See
\cite{NP2006}, page 30.

\begin{lemma}
\label{Lem1}\noindent\emph{(}Legendre duality, \emph{\cite{NP2006}}, page
\emph{41)}. Let $F:I\rightarrow\mathbb{R}$\ be a closed convex function. Then
its conjugate $F^{\ast}:I^{\ast}\rightarrow\mathbb{R}$ is also convex and
closed and:

$i)$ $xy\leq F(x)+F^{\ast}(y)$ for all $x\in I,$ $y\in I^{\ast};$

$ii)$ $xy=F(x)+F^{\ast}(y)$ if, and only if, $y\in\partial F(x);$

$iii)$ $\partial F^{\ast}=\,\left(  \partial F\right)  ^{-1}$ \emph{(}as
graphs\emph{)}$;$

$iv)$ $F^{\ast\ast}=F.$
\end{lemma}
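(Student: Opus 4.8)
The plan is to extract all four assertions from the single definition $F^{\ast}(y)=\sup\{xy-F(x):x\in I\}$, proving them in the order i), ii), iv), iii), with iv) being the only step that requires real work. For the structural claims about $F^{\ast}$: for each fixed $x\in I$ the map $y\mapsto xy-F(x)$ is affine, hence convex and lower semicontinuous, so the pointwise supremum $F^{\ast}$ is again convex with closed sublevel sets, i.e. a closed convex function, and its effective domain $I^{\ast}=\{y:F^{\ast}(y)<\infty\}$ is therefore an interval. It is nonempty: choosing an interior point $x_{0}\in I$ and some $\lambda\in\partial F(x_{0})$ (which exists because $F$ is convex), the inequality $F(x)\ge F(x_{0})+\lambda(x-x_{0})$ gives $x\lambda-F(x)\le x_{0}\lambda-F(x_{0})$ for all $x\in I$, so $\lambda\in I^{\ast}$. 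Assertion i) is then immediate, since $F^{\ast}(y)\ge xy-F(x)$ for every admissible pair by the very definition of the supremum.

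Next, the equality case ii) is a pure rewriting: the relation $xy=F(x)+F^{\ast}(y)$ holds exactly when the supremum defining $F^{\ast}(y)$ is attained at that particular $x$, that is, $x'y-F(x')\le xy-F(x)$ for every $x'\in I$; rearranged, this says $F(x')\ge F(x)+y(x'-x)$ for all $x'\in I$, which is precisely the definition of $y\in\partial F(x)$.

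The core is the biconjugation identity iv). One direction is free: rewriting i) as $xy-F^{\ast}(y)\le F(x)$ and taking the supremum over $y\in I^{\ast}$ yields $F^{\ast\ast}(x)\le F(x)$ for every $x\in I$. For the reverse inequality at an interior point $x\in\mathrm{int}\,I$, use $\partial F(x)\ne\emptyset$ and pick $y\in\partial F(x)$; by ii) we have $xy-F^{\ast}(y)=F(x)$, hence $F^{\ast\ast}(x)\ge xy-F^{\ast}(y)=F(x)$, so $F^{\ast\ast}=F$ on $\mathrm{int}\,I$. Upgrading this to all of $I$ (and checking $I^{\ast\ast}=I$) is where the hypothesis that $F$ is \emph{closed} is used: $F$ and $F^{\ast\ast}$ are both closed convex functions that agree on $\mathrm{int}\,I$, and a closed convex function on an interval coincides with the lower semicontinuous extension of its restriction to the interior, so they agree everywhere. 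I expect this boundary bookkeeping to be the only genuinely delicate point; alternatively one may argue geometrically, writing $\mathrm{epi}\,F$ as the intersection of all non-vertical closed half-planes containing it and noting that each such half-plane is the epigraph of an affine minorant $x\mapsto xy-F^{\ast}(y)$ of $F$.

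Finally iii). Applying ii) to $F$ gives $y\in\partial F(x)\iff xy=F(x)+F^{\ast}(y)$; applying ii) to the closed convex function $F^{\ast}$, whose conjugate is $F^{\ast\ast}=F$ by iv), gives $x\in\partial F^{\ast}(y)\iff xy=F^{\ast}(y)+F(x)$. The two right-hand conditions are identical, so $y\in\partial F(x)$ if and only if $x\in\partial F^{\ast}(y)$; that is, the graph of $\partial F^{\ast}$ is obtained from that of $\partial F$ by interchanging the two coordinates, which is exactly the assertion $\partial F^{\ast}=(\partial F)^{-1}$ as graphs.
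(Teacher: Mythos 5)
The paper offers no proof of this lemma: it is quoted as a known result from Niculescu--Persson \cite{NP2006}, p.~41, so there is no in-paper argument to compare yours against. Judged on its own, your proof is the standard Fenchel--Legendre argument and is essentially correct. Parts i) and ii) are exactly right (ii) really is just the observation that equality means the supremum defining $F^{\ast}(y)$ is attained at $x$, which rearranges to the paper's definition of $\partial F(x)$), and iii) is correctly reduced to applying ii) to both $F$ and $F^{\ast}$ once iv) is known. The only place where full rigor is still owed is the one you flag yourself in iv): the inequality $F^{\ast\ast}\leq F$ and the equality on $\operatorname{int}I$ are clean, but you must also verify that $F^{\ast\ast}(x)=+\infty$ for $x\notin I$ (so that $I^{\ast\ast}=I$) and that the two functions agree at endpoints of $I$; both facts genuinely use closedness of $F$ (for a non-closed $F$, such as $F\equiv 0$ on an open bounded interval, $F^{\ast\ast}$ lives on the closure), and your appeal to the lower semicontinuous extension, or alternatively to the representation of $\operatorname{epi}F$ as an intersection of non-vertical closed half-planes, is the right way to discharge them, though neither is carried out in detail. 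One further small point: your nonemptiness argument for $I^{\ast}$ relies on $\partial F(x_{0})\neq\emptyset$ at interior points, which the paper records just before the lemma, so that is legitimately available. In short: a correct, self-contained standard proof of a result the paper merely cites, with the boundary bookkeeping in iv) identified but left as a sketch.
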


Recall that the inverse of a graph $\Gamma$ is the set $\Gamma^{-1}=\left\{
\left(  y,x\right)  :(x,y)\in\Gamma\right\}  .$

How far is Young's inequality from the Legendre duality? Surprisingly, they
are pretty closed in the sense that in most cases the Legendre duality can be
converted into a Young like inequality. Indeed, every continuous convex
function admits an integral representation.

\begin{lemma}
\label{Lem2}\noindent\emph{(}See \emph{\cite{NP2006}}, page \emph{37)}. Let
$F$\ be a continuous convex function defined on an interval $I$ and let
$\varphi:I\rightarrow\mathbb{R}$ be a function such that $\varphi
(x)\in\partial F(x)$ for every $x\in\,I.$\ Then for every $a<b$ in $I$ we
have
\[
F(b)-F(a)=\int_{a}^{b}\,\varphi(t)\,dt.
\]

\end{lemma}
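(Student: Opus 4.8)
The plan is to deduce the formula from the fundamental theorem of calculus for absolutely continuous functions, after showing that $\varphi$ coincides almost everywhere with the ordinary derivative $F'$.

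\emph{Step 1 (locating $\varphi$).} Convexity of $F$ implies that at every interior point $x$ of $I$ the one-sided derivatives $F'_-(x)$ and $F'_+(x)$ exist, are finite, satisfy $F'_-(x)\le F'_+(x)$, and are nondecreasing with $F'_+(x)\le F'_-(y)$ whenever $x<y$; moreover a direct computation from the definition of the subdifferential gives $\partial F(x)=[F'_-(x),F'_+(x)]$ there. Hence the hypothesis $\varphi(x)\in\partial F(x)$ forces $F'_-(x)\le\varphi(x)\le F'_+(x)$, so in particular $\varphi(x)=F'(x)$ at every point where $F$ is differentiable. Since $F'_-$ and $F'_+$ are monotone they can differ only on an at most countable set, so $F$ is differentiable off a countable (hence Lebesgue-null) set, and therefore $\varphi=F'$ almost everywhere on $I$.

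\emph{Step 2 (integral representation of $F$).} Fix $a<b$ in $I$. On any compact subinterval of the interior of $I$ the function $F$ is Lipschitz, its difference quotients being squeezed between two values of the monotone one-sided derivatives; hence $F$ is absolutely continuous there, and $F(y)-F(x)=\int_x^y F'(t)\,dt$ for $x<y$ in the interior. Letting $x\downarrow a$ and $y\uparrow b$ and using the continuity of $F$ at the endpoints, together with the fact that the monotone function $F'$ is eventually of constant sign near each endpoint (so that monotone convergence applies to $\int_x^y |F'|$), one obtains $F'\in L^1[a,b]$ and $F(b)-F(a)=\int_a^b F'(t)\,dt$. Replacing $F'$ by $\varphi$ alters the integrand only on a null set, which yields $F(b)-F(a)=\int_a^b\varphi(t)\,dt$, as claimed.

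I expect the only mildly delicate point to be the passage to the endpoints in Step 2: when $a$ or $b$ is an endpoint of $I$, $F$ need not be Lipschitz up to that endpoint, so one must argue separately that $F'$ stays integrable there. Everything else — the description of $\partial F$ on the interior, the countability of the non-differentiability set, local Lipschitzness of convex functions, and the fundamental theorem of calculus for absolutely continuous functions — is entirely standard, and is precisely the material invoked from \cite{NP2006}.
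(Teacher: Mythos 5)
Your proof is correct; the paper itself supplies no proof of Lemma \ref{Lem2}, quoting it directly from \cite{NP2006}, and your two steps (identifying $\varphi$ with $F'$ off a countable set via $\partial F(x)=[F'_-(x),F'_+(x)]$, then applying the fundamental theorem of calculus for the locally Lipschitz function $F$ and passing to the endpoints by continuity and monotone convergence) constitute exactly the standard argument that reference invokes. The endpoint integrability issue you flag is handled precisely as you suggest, so nothing is missing.
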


As a consequence, the heuristic meaning of the formula $i)$ in Lemma
\ref{Lem1} is the following Young like inequality,
\[
ab\leq\int_{a_{0}}^{a}\varphi\left(  x\right)  dx+\int_{b_{0}}^{b}\psi\left(
y\right)  dy\text{\quad for all }a\in I,\ b\in I^{\ast},
\]
where $\varphi$ and $\psi$ are selection functions for $\partial F$ and
respectively $\left(  \partial F\right)  ^{-1}$. Now it becomes clear that
Young's inequality should work outside strict monotonicity (as well as outside
continuity). The details are presented in Section 2. Our approach (based on
the geometric meaning of integrals as areas) allows us to extend the framework
of integrability to all positive measures $\rho$ which are locally absolutely
continuous with respect to the planar Lebesgue measure $dxdy$. See Theorem
\ref{ThmYoungNondecr}\ below.

A special case of Young's inequality is%
\[
xy\leq\frac{x^{p}}{p}+\frac{y^{q}}{q},
\]
which works for all $x,y\geq0$, and $p,q>1$ with $1/p+1/q=1$. Theorem
\ref{ThmYoungNondecr} yields the following companion to this inequality in the
case of Gaussian measure $\frac{4}{2\pi}e^{-x^{2}-y^{2}}dxdy$ on
$[0,\infty)\times\lbrack0,\infty):$
\[
\operatorname{erf}(x)\operatorname{erf}(y)\leq\frac{2}{\sqrt{\pi}}\int_{0}%
^{x}\operatorname{erf}\left(  s^{p-1}\right)  e^{-s^{2}}ds+\frac{2}{\sqrt{\pi
}}\int_{0}^{y}\operatorname{erf}\left(  t^{q-1}\right)  e^{-t^{2}}dt,
\]
where%
\begin{equation}
\operatorname{erf}(x)=\frac{2}{\sqrt{\pi}}\int_{0}^{x}e^{-s^{2}}ds \label{erf}%
\end{equation}
is the \emph{Gauss error function} (or the erf function).

The precision of our generalization of Young's inequality makes the objective
of Section 3.

In Section 4 we discuss yet another extension of Young's inequality, based on
recent work done by J. Jak\v{s}eti\'{c} and J. E. Pe\v{c}ari\'{c} \cite{P}.

The paper ends by noticing the connection of our result to the theory of
$c$-convexity (that is, of convexity associated to a cost density function).

Last but not the least, all results in this paper can be extended verbatim to
the framework of nondecreasing functions $f:[a_{0},a_{1})\rightarrow\lbrack
A_{0},A_{1})$ such that $a_{0}<a_{1}\leq\infty$ and $A_{0}<A_{1}\leq\infty,$
$f(a_{0})=A_{0}$ and $\lim_{x\rightarrow a_{1}}f(x)=A_{1}.$ In other words,
the interval $[0,\infty)$ plays no special role in Young's inequality.

Besides, there is a straightforward companion of Young's inequality for
nonincreasing functions, but this is outside the scope of the present paper.

\section{Young's inequality for weighted measures}

In what follows $f:\left[  0,\infty\right)  \longrightarrow\left[
0,\infty\right)  $ will denote a nondecreasing function such that $f\left(
0\right)  =0$ and $\underset{x\rightarrow\infty}{\lim}f\left(  x\right)
=\infty.$ Since $f$ is not necessarily injective we will attach to $f$ a
\emph{pseudo-inverse} by the following formula:%
\[
f_{\sup}^{-1}:\left[  0,\infty\right)  \longrightarrow\left[  0,\infty\right)
,\quad f_{\sup}^{-1}\left(  y\right)  =\inf\{x\geq0:f(x)>y\}.
\]

Clearly, $f_{\sup}^{-1}$ is nondecreasing and $f_{\sup}^{-1}\left(  f\left(
x\right)  \right)  \geq x$ for all $x.$ Moreover, with the convention
$f(0-)=0,$
\[
f_{\sup}^{-1}\left(  y\right)  =\sup\left\{  x:y\in\left[  f\left(  x-\right)
,f\left(  x+\right)  \right]  \right\}  ;
\]
here $f\left(  x-\right)  $ and $f\left(  x+\right)  $ represent the lateral
limits at $x$. When $f$ is also continuous,%
\[
f_{\sup}^{-1}(y)=\max\left\{  x\geq0:y=f(x)\right\}  .
\]

\begin{remark}
$($F. Cunningham Jr. and N. Grossman \cite{CG1971}$)$. \emph{Since
pseudo-inverses will be used as integrands, it is convenient to enlarge the
concept of pseudo-inverse by referring to any function} $g$ \emph{such that}%
\[
f_{\inf}^{-1}\leq g\leq f_{\sup}^{-1},
\]
\emph{where} $f_{\inf}^{-1}(y)=\sup\{x\geq0:f(x)<y\}$. \emph{Necessarily,} $g$
\emph{is nondecreasing and any two} \emph{pseudo-inverses agree except on a
countable set (so their integrals will be the same)}.
\end{remark}

Given $0\leq a<b,$ we define the \emph{epigraph} and the \emph{hypograph} of
$f|_{[a,b]}$ respectively by%
\[
\operatorname{epi}f|_{[a,b]}=\left\{  \left(  x,y\right)  \in\left[
a,b\right]  \times\left[  f\left(  a\right)  ,f\left(  b\right)  \right]
:y\geq f\left(  x\right)  \right\}  ,
\]
and%
\[
\operatorname{hyp}f|_{[a,b]}=\left\{  \left(  x,y\right)  \in\left[
a,b\right]  \times\left[  f\left(  a\right)  ,f\left(  b\right)  \right]
:y\leq f\left(  x\right)  \right\}  .
\]

Their intersection is the \emph{graph} of $f|_{[a,b]},$
\[
\operatorname*{graph}f|_{[a,b]}=\left\{  \left(  x,y\right)  \in\left[
a,b\right]  \times\left[  f\left(  a\right)  ,f\left(  b\right)  \right]
:y=f\left(  x\right)  \right\}  .
\]

Notice that our definitions of epigraph and hypograph are not the standard
ones, but agree with them in the context of monotone functions.

We will next consider a measure $\rho$ on $\left[  0,\infty\right)
\times\left[  0,\infty\right)  ,$ which is locally absolutely continuous with
respect to the Lebesgue measure $dxdy,$ that is, $\rho$ is of the form
\[
\rho\left(  A\right)  =\int_{A}K\left(  x,y\right)  dxdy,
\]
where $K:\left[  0,\infty\right)  \times\left[  0,\infty\right)
\longrightarrow\lbrack0,\infty)\ $is a Lebesgue locally integrable function,
and $A$ is any compact subset of $\left[  0,\infty\right)  \times\left[
0,\infty\right)  $.

Clearly,%
\begin{align*}
\rho\left(  \operatorname{hyp}f|_{[a,b]}\right)  +\rho\left(
\operatorname{epi}f|_{[a,b]}\right)   &  =\rho\left(  \left[  a,b\right]
\times\left[  f\left(  a\right)  ,f\left(  b\right)  \right]  \right) \\
&  =\int_{a}^{b}\int_{f\left(  a\right)  }^{f\left(  b\right)  }K\left(
x,y\right)  dydx.
\end{align*}

Moreover,%
\[
\rho\left(  \operatorname{hyp}f|_{[a,b]}\right)  =\int_{a}^{b}\left(
\int_{f\left(  a\right)  }^{f\left(  x\right)  }K\left(  x,y\right)
dy\right)  dx.
\]
and%
\begin{equation}
\rho\left(  \operatorname{epi}f|_{[a,b]}\right)  =\int_{f\left(  a\right)
}^{f\left(  b\right)  }\left(  \int_{a}^{f_{\sup}^{-1}\left(  y\right)
}K\left(  x,y\right)  dx\right)  dy.\nonumber
\end{equation}

The discussion above can be summarized as follows:

\begin{lemma}
\label{Lem3}Let $f:\left[  0,\infty\right)  \longrightarrow\left[
0,\infty\right)  $ be a nondecreasing function such that $f\left(  0\right)
=0$ and $\underset{x\rightarrow\infty}{\lim}f\left(  x\right)  =\infty$. Then
for every Lebesgue locally integrable function $K:\left[  0,\infty\right)
\times\left[  0,\infty\right)  \longrightarrow\lbrack0,\infty)$ and every pair
of nonnegative numbers $a<b,$%
\begin{multline*}
\int_{a}^{b}\left(  \int_{f\left(  a\right)  }^{f\left(  x\right)  }K\left(
x,y\right)  dy\right)  dx+\int_{f\left(  a\right)  }^{f\left(  b\right)
}\left(  \int_{a}^{f_{\sup}^{-1}\left(  y\right)  }K\left(  x,y\right)
dx\right)  dy\\
=\int_{a}^{b}\int_{f\left(  a\right)  }^{f\left(  b\right)  }K\left(
x,y\right)  dydx.
\end{multline*}

\end{lemma}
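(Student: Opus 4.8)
The plan is to exhibit the identity as a straightforward consequence of the geometric decomposition already set up in the text, so the proof is essentially a matter of assembling the three displayed equalities preceding the statement. First I would note that, since $f$ is nondecreasing with $f(0)=0$ and $\lim_{x\to\infty}f(x)=\infty$, the rectangle $[a,b]\times[f(a),f(b)]$ is a compact subset of $[0,\infty)\times[0,\infty)$, so $\rho$ is finite on it and Tonelli's theorem applies to the nonnegative locally integrable kernel $K$; this justifies writing $\rho\bigl([a,b]\times[f(a),f(b)]\bigr)$ as the iterated integral $\int_a^b\int_{f(a)}^{f(b)}K(x,y)\,dy\,dx$.

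Next I would verify the set-theoretic decomposition
\[
\operatorname{hyp}f|_{[a,b]}\cup\operatorname{epi}f|_{[a,b]}=[a,b]\times[f(a),f(b)],
\]
whose intersection is $\operatorname{graph}f|_{[a,b]}$. Because the graph of a monotone function has planar Lebesgue measure zero and $\rho$ is absolutely continuous with respect to $dx\,dy$, we get $\rho(\operatorname{graph}f|_{[a,b]})=0$, hence the additivity
\[
\rho(\operatorname{hyp}f|_{[a,b]})+\rho(\operatorname{epi}f|_{[a,b]})=\int_a^b\int_{f(a)}^{f(b)}K(x,y)\,dy\,dx.
\]
Then I would compute each of the two pieces as an iterated integral: slicing the hypograph by vertical lines $x=\text{const}$ gives the fibre $[f(a),f(x)]$, yielding $\int_a^b\bigl(\int_{f(a)}^{f(x)}K(x,y)\,dy\bigr)dx$; slicing the epigraph by horizontal lines $y=\text{const}$ gives the fibre $[a,f_{\sup}^{-1}(y)]$, yielding $\int_{f(a)}^{f(b)}\bigl(\int_a^{f_{\sup}^{-1}(y)}K(x,y)\,dx\bigr)dy$. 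Adding these two expressions and invoking the previous display gives the claimed identity.

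The one point deserving care — and the main (modest) obstacle — is the precise description of the horizontal slices of the epigraph: for a fixed $y\in[f(a),f(b)]$ one must check that $\{x\in[a,b]:(x,y)\in\operatorname{epi}f|_{[a,b]}\}=\{x\in[a,b]:f(x)\le y\}$ coincides up to a $\rho$-null set (indeed, up to at most the single point $x=f_{\sup}^{-1}(y)$) with the interval $[a,f_{\sup}^{-1}(y)]$. This follows from the definition $f_{\sup}^{-1}(y)=\inf\{x\ge 0:f(x)>y\}$ and monotonicity of $f$: if $x<f_{\sup}^{-1}(y)$ then $f(x)\le y$, while if $x>f_{\sup}^{-1}(y)$ then $f(x)>y$; the endpoint itself contributes nothing to a Lebesgue-type integral. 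A symmetric (and even simpler) remark handles the vertical slices of the hypograph, where the fibre is exactly $[f(a),f(x)]$. Once these slice identifications are in hand, Tonelli's theorem turns the two planar measures into the stated iterated integrals and the proof is complete.
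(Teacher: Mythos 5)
Your proof is correct and follows essentially the same route as the paper, which obtains Lemma \ref{Lem3} precisely by splitting the rectangle $[a,b]\times[f(a),f(b)]$ into $\operatorname{hyp}f|_{[a,b]}$ and $\operatorname{epi}f|_{[a,b]}$ (whose intersection, the graph, is $dx\,dy$-null) and evaluating each piece by Tonelli slicing, vertically for the hypograph and horizontally for the epigraph. Your extra verification that the horizontal fibre $\{x\in[a,b]:f(x)\le y\}$ agrees with $[a,f_{\sup}^{-1}(y)]$ up to a null set is a welcome detail that the paper leaves implicit.
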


We can now state the main result of this section:

\begin{theorem}
\label{ThmYoungNondecr}\emph{(}Young's inequality for nondecreasing
functions\textbf{\emph{)}. }Under the assumptions of Lemma $3$, for every pair
of nonnegative numbers $a<b,$ and every number $c\geq f(a)$ we have
\begin{multline*}
\int_{a}^{b}\int_{f\left(  a\right)  }^{c}K\left(  x,y\right)  dydx\\
\leq\int_{a}^{b}\left(  \int_{f\left(  a\right)  }^{f\left(  x\right)
}K\left(  x,y\right)  dy\right)  dx+\int_{f\left(  a\right)  }^{c}\left(
\int_{a}^{f_{\sup}^{-1}\left(  y\right)  }K\left(  x,y\right)  dx\right)  dy.
\end{multline*}
If in addition $K$ is strictly positive almost everywhere, then the equality
occurs if and only if $c\in\left[  f\left(  b-\right)  ,f\left(  b+\right)
\right]  .$

\begin{proof}
We start with the case where $f\left(  a\right)  \leq c\leq f\left(
b-\right)  $. See Figure \ref{fig1}. \
\begin{figure}
[h]
\begin{center}
\includegraphics[
height=5.7464cm,
width=6.491cm
]%
{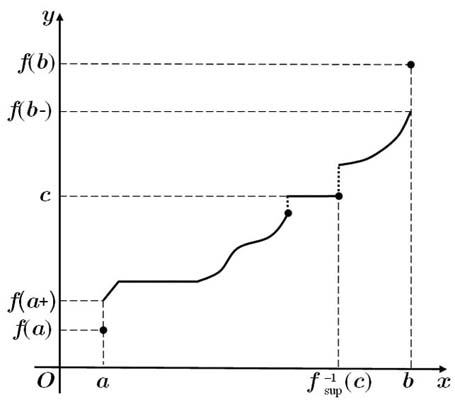}%
\caption{The geometry of Young's inequality when $f\left(  a\right)  \leq
c\leq f\left(  b-\right)  .$}%
\label{fig1}%
\end{center}
\end{figure}

In this case,%
\begin{multline*}
\int_{a}^{b}\left(  \int_{f\left(  a\right)  }^{f\left(  x\right)  }K\left(
x,y\right)  dy\right)  dx+\int_{f\left(  a\right)  }^{c}\left(  \int
_{a}^{f_{\sup}^{-1}\left(  y\right)  }K\left(  x,y\right)  dx\right)  dy\\
=\int_{a}^{f_{\sup}^{-1}\left(  c\right)  }\left(  \int_{f\left(  a\right)
}^{f\left(  x\right)  }K\left(  x,y\right)  dy\right)  dx+\int_{f\left(
a\right)  }^{c}\left(  \int_{a}^{f_{\sup}^{-1}\left(  y\right)  }K\left(
x,y\right)  dx\right)  dy\\
+\int_{f_{\sup}^{-1}\left(  c\right)  }^{b}\left(  \int_{f\left(  a\right)
}^{f\left(  x\right)  }K\left(  x,y\right)  dy\right)  dx\\
=\int_{a}^{f_{\sup}^{-1}\left(  c\right)  }\int_{f\left(  a\right)  }%
^{c}K\left(  x,y\right)  dydx+\int_{f_{\sup}^{-1}\left(  c\right)  }%
^{b}\left(  \int_{c}^{f\left(  x\right)  }K\left(  x,y\right)  dy\right)  dx\\
+\int_{f_{\sup}^{-1}\left(  c\right)  }^{b}\int_{f\left(  a\right)  }%
^{c}K\left(  x,y\right)  dydx\\
\geq\int_{a}^{b}\int_{f\left(  a\right)  }^{c}K\left(  x,y\right)  dydx,
\end{multline*}
with equality if and only if $\int_{f_{\sup}^{-1}\left(  c\right)  }%
^{b}\left(  \int_{c}^{f\left(  x\right)  }K\left(  x,y\right)  dy\right)
dx=0.$ When $K$ is strictly positive almost everywhere, this means that
$c=f\left(  b-\right)  $.

If $c\geq f\left(  b+\right)  ,$ then%
\begin{multline*}
\int_{a}^{b}\left(  \int_{f\left(  a\right)  }^{f\left(  x\right)  }K\left(
x,y\right)  dy\right)  dx+\int_{f\left(  a\right)  }^{c}\left(  \int
_{a}^{f_{\sup}^{-1}\left(  y\right)  }K\left(  x,y\right)  dx\right)  dy\\
=\int_{a}^{f_{\sup}^{-1}\left(  c\right)  }\left(  \int_{f\left(  a\right)
}^{f\left(  x\right)  }K\left(  x,y\right)  dy\right)  dx+\int_{f\left(
a\right)  }^{c}\left(  \int_{a}^{f_{\sup}^{-1}\left(  y\right)  }K\left(
x,y\right)  dx\right)  dy\\
-\int_{b}^{f_{\sup}^{-1}\left(  c\right)  }\left(  \int_{f\left(  a\right)
}^{f\left(  x\right)  }K\left(  x,y\right)  dy\right)  dx\\
=\int_{a}^{f_{\sup}^{-1}\left(  c\right)  }\int_{f\left(  a\right)  }%
^{c}K\left(  x,y\right)  dydx\\
-\left(  \int_{b}^{f_{\sup}^{-1}\left(  c\right)  }\left(  \int_{f\left(
a\right)  }^{f\left(  c\right)  }K\left(  x,y\right)  dy\right)
dx-\int_{f\left(  b+\right)  }^{c}\left(  \int_{a}^{f_{\sup}^{-1}\left(
y\right)  }K\left(  x,y\right)  dx\right)  dy\right)  \\
\geq\int_{a}^{b}\int_{f\left(  a\right)  }^{c}K\left(  x,y\right)  dydx.
\end{multline*}
Equality holds if and only if$\ \int_{f\left(  b+\right)  }^{c}\left(
\int_{a}^{f_{\sup}^{-1}\left(  y\right)  }K\left(  x,y\right)  dx\right)  dy$,
that is, when $c=f\left(  b+\right)  $ (provided that $K$ is strictly positive
almost everywhere). See Figure 2.%
\begin{figure}
[ptb]
\begin{center}
\includegraphics[
height=5.3927cm,
width=6.6558cm
]%
{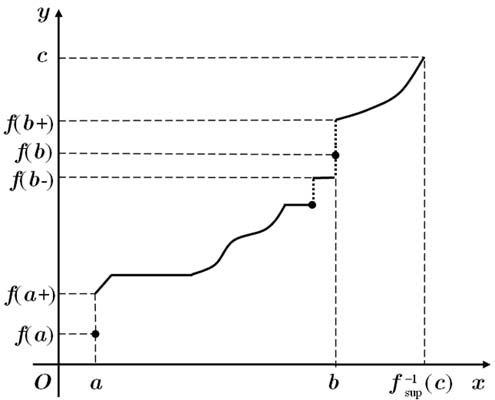}%
\caption{The case $c\geq f\left(  b+\right)  .$}%
\label{fig2}%
\end{center}
\end{figure}

If $c\in\left(  f\left(  b-\right)  ,f\left(  b+\right)  \right)  ,$ then
$f_{\sup}^{-1}\left(  c\right)  =b$ and the inequality in the statement of
Theorem \ref{ThmYoungNondecr} is actually an equality. See Figure \ref{fig3}.%
\begin{figure}
[h]
\begin{center}
\includegraphics[
height=6.1593cm,
width=6.9216cm
]%
{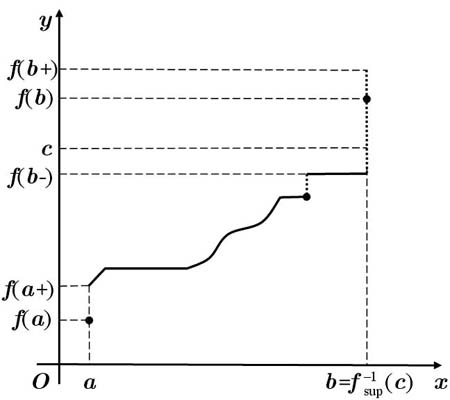}%
\caption{The equality case.}%
\label{fig3}%
\end{center}
\end{figure}

\end{proof}
\end{theorem}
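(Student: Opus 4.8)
The plan is to deduce the inequality from the exact identity of Lemma \ref{Lem3} by decomposing the rectangle $[a,b]\times[f(a),c]$ according to the position of the level $c$ relative to the lateral limits $f(b-)$ and $f(b+)$. Throughout I abbreviate $\beta=f_{\sup}^{-1}(c)$ and use Tonelli's theorem freely, the integrand $K$ being nonnegative. First I would recall the two representations established just before Lemma \ref{Lem3},
\[
\rho\bigl(\operatorname{hyp}f|_{[a,b]}\bigr)=\int_a^b\Bigl(\int_{f(a)}^{f(x)}K(x,y)\,dy\Bigr)dx,\qquad \rho\bigl(\operatorname{epi}f|_{[a,b]}\bigr)=\int_{f(a)}^{f(b)}\Bigl(\int_a^{f_{\sup}^{-1}(y)}K(x,y)\,dx\Bigr)dy,
\]
and observe that the right-hand side of the asserted inequality is $\rho(\operatorname{hyp}f|_{[a,b]})$ plus the epigraph integral truncated at height $c$, while the left-hand side is $\rho\bigl([a,b]\times[f(a),c]\bigr)$. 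I would also isolate at the outset the elementary facts about the pseudo-inverse that drive everything: for $y\ge f(a)$ one has $x\le f_{\sup}^{-1}(y)$ precisely when $f(x-)\le y$; and $x<\beta$ forces $f(x)\le c$, while $x>\beta$ forces $f(x-)\ge c$.

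Then I would run the three cases. If $f(a)\le c\le f(b-)$, then $\beta\le b$; splitting each inner $x$-integral at $\beta$, over $[a,\beta]$ the hypograph strip and the truncated epigraph strip together fill $[a,\beta]\times[f(a),c]$ and overlap only on a planar-null set (the graph together with the vertical jump segments), while over $[\beta,b]$ the hypograph strip already contains the whole column $[f(a),c]$, the surplus being exactly $\int_\beta^b\bigl(\int_c^{f(x)}K(x,y)\,dy\bigr)dx\ge 0$. This surplus is the only slack, and when $K>0$ a.e.\ it vanishes precisely when $\beta=b$, i.e.\ when $c=f(b-)$. If $c\ge f(b+)$, then $\beta\ge b$ and the same bookkeeping on $[a,b]$ leaves over, inside the truncated epigraph integral, the genuinely extra contribution over $[b,\beta]$ at heights in $[f(b+),c]$, which lies outside $[a,b]\times[f(a),c]$; isolating it exhibits the right-hand side as $\rho([a,b]\times[f(a),c])$ plus a nonnegative term vanishing (for $K>0$ a.e.) exactly when $c=f(b+)$. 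Finally, if $f(b-)<c<f(b+)$ then $f$ jumps across $c$ at $b$, so $\beta=b$, and the computation of the first case degenerates into a plain identity, giving equality — consistent with the claimed equality set $[f(b-),f(b+)]$.

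The main obstacle I expect is not any single estimate but the careful bookkeeping around the pseudo-inverse at the endpoint: verifying the three equivalences displayed above, checking that the discarded overlap regions are planar-null (so that they may be dropped when proving the inequality), and confirming that the surplus or excess regions have exactly the stated measure, so that the ``if and only if'' in the equality statement comes out correctly under the hypothesis $K>0$ almost everywhere. Once those set-theoretic facts about $f_{\sup}^{-1}$ are pinned down, the rest is routine: split the iterated integrals at $\beta$, apply Tonelli, and read off the sign of the leftover term.
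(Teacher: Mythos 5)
Your proposal follows essentially the same route as the paper's proof: the same three-case split according to whether $c\le f(b-)$, $c\ge f(b+)$, or $c\in(f(b-),f(b+))$, the same decomposition of the iterated integrals at $\beta=f_{\sup}^{-1}(c)$, and the same identification of the surplus term $\int_{\beta}^{b}\bigl(\int_{c}^{f(x)}K(x,y)\,dy\bigr)dx$ (respectively its analogue over $[b,\beta]$) whose vanishing characterizes equality. The only difference is that you make explicit the null-set and pseudo-inverse bookkeeping that the paper delegates to its figures, which is a point in your favor rather than a divergence of method.
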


\begin{corollary}
\label{CorContIncr}\emph{(}Young's inequality for continuous increasing
functions\emph{)}\textbf{. }If $f:\left[  0,\infty\right)  \longrightarrow
\left[  0,\infty\right)  $ is also continuous and increasing, then%
\begin{multline*}
\int_{a}^{b}\int_{f\left(  a\right)  }^{c}K\left(  x,y\right)  dydx\\
\leq\int_{a}^{b}\left(  \int_{f\left(  a\right)  }^{f\left(  x\right)
}K\left(  x,y\right)  dy\right)  dx+\int_{f\left(  a\right)  }^{c}\left(
\int_{a}^{f^{-1}\left(  y\right)  }K\left(  x,y\right)  dx\right)  dy
\end{multline*}
for every real number $c\geq f(a)$. Assuming $K$ strictly positive almost
everywhere, the equality occurs if and only if $\ c=f\left(  b\right)  .$
\end{corollary}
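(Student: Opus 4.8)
The plan is to deduce Corollary \ref{CorContIncr} directly from Theorem \ref{ThmYoungNondecr} by specializing to the case of a continuous and strictly increasing $f$, where the pseudo-inverse collapses to the genuine inverse. First I would observe that when $f$ is continuous and increasing, it is a strictly monotone bijection from $[0,\infty)$ onto $[0,\infty)$ (using $f(0)=0$ and $\lim_{x\to\infty}f(x)=\infty$), so $f^{-1}$ is well defined, continuous and increasing; in particular $f(x-)=f(x)=f(x+)$ for every $x$, hence $f_{\sup}^{-1}(y)=\max\{x\ge0:f(x)=y\}=f^{-1}(y)$ on the relevant range. Substituting $f_{\sup}^{-1}=f^{-1}$ into the inequality of Theorem \ref{ThmYoungNondecr} gives the displayed inequality of the corollary verbatim.

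For the equality case, I would invoke the last sentence of Theorem \ref{ThmYoungNondecr}: assuming $K>0$ almost everywhere, equality holds if and only if $c\in[f(b-),f(b+)]$. Since $f$ is continuous at $b$, this interval degenerates to the single point $\{f(b)\}$, so the equality condition becomes exactly $c=f(b)$. This is the whole argument; there is essentially no obstacle, since all the analytic work has already been done in Lemma \ref{Lem3} and Theorem \ref{ThmYoungNondecr}. The only point requiring a line of care is the identification $f_{\sup}^{-1}=f^{-1}$ as integrands, which is immediate here because a continuous increasing $f$ has no jumps and no flat stretches, so the set $\{x:f(x)=y\}$ is a single point for each $y$ in the range of $f$; I would simply remark that the pseudo-inverse and the inverse coincide everywhere on $[f(a),c]\subseteq[0,\infty)$, so the two iterated integrals are literally equal, not merely equal after discarding a null set.

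If one prefers a slightly more self-contained presentation, I could alternatively note that a continuous increasing $f$ satisfies the hypotheses of Lemma \ref{Lem3} and rerun the three-case geometric argument of the proof of Theorem \ref{ThmYoungNondecr} with $f(b-)=f(b)=f(b+)$; then the cases $c\le f(b-)$ and $c\ge f(b+)$ exhaust all $c\ge f(a)$, the middle case $c\in(f(b-),f(b+))$ being vacuous, and in each surviving case the inequality and its equality clause reduce to the stated ones. But the cleaner route is the specialization above, and that is the proof I would write.
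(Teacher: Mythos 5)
Your proposal is correct and is exactly the route the paper intends: the corollary is stated without a separate proof precisely because, for a continuous increasing $f$, one has $f_{\sup}^{-1}=f^{-1}$ and $f(b-)=f(b)=f(b+)$, so both the inequality and the equality clause of Theorem \ref{ThmYoungNondecr} specialize verbatim. Your extra remark that the two iterated integrals coincide literally (not just up to a null set) is a harmless refinement; nothing further is needed.
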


If $K\left(  x,y\right)  =1$ for every $x,y\in\left[  0,\infty\right)  $, then
Corollary \ref{CorContIncr} asserts that%
\[
bc-af\left(  a\right)  <\int_{a}^{b}f\left(  x\right)  dx+\int_{f\left(
a\right)  }^{c}f^{-1}\left(  y\right)  dy\text{\quad for all }0<a<b\text{ and
}c>f(a);
\]
equality occurs if and only if $c=f\left(  b\right)  $. In the special case
where $a=f\left(  a\right)  =0$, this reduces to the classical inequality of Young.

\begin{remark}
$($The probabilistic companion of Theorem \ref{ThmYoungNondecr}$)$.
\emph{Suppose there is given a nonnegative random variable} $X:[0,\infty
)\rightarrow\lbrack0,\infty)$ \emph{whose cumulative distribution function}
$F_{X}(x)=P\left(  X\leq x\right)  $ \emph{admits a density, that is, a
nonnegative Lebesgue-integrable function} $\rho_{X}$ \emph{such} \emph{that}
\[
P\left(  x\leq X\leq y\right)  =\int_{x}^{y}\rho_{X}(u)du\text{\quad for all
}x\leq y.
\]
\emph{The} quantile function \emph{of the distribution function} $F_{X}$
$($\emph{also known as the} increasing rearrangement \emph{of the random
variable} $X)$ \emph{is defined by}%
\[
Q_{X}(x)=\inf\left\{  y:F_{X}(y)\geq x\right\}  .
\]
\emph{Thus, a quantile function is nothing but a pseudo-inverse of} $F_{X}$.
\emph{Motivated by Statistics, a number of fast algorithms were developed for
computing the quantile functions with high accuracy. See} \cite{A}.
\emph{Without entering the details, we recall here the remarkable formula}
\emph{(due to} \emph{G.} \emph{Steinbrecher)} \emph{for the quantile function
of the normal distribution:}%
\[
\operatorname{erf}^{-1}(z)=\sum_{k=0}^{\infty}\frac{c_{k}\left(  \frac
{\sqrt{\pi}}{2}z\right)  ^{2k+1}}{2k+1},
\]
\emph{where} $c_{0}=1$ \emph{and}
\[
c_{k}=\sum_{m=0}^{k-1}\frac{c_{m}c_{k-m-1}}{\left(  m+1\right)  \left(
2m+1\right)  }\text{\quad\emph{for all} }k\geq1.
\]

\emph{According to Theorem} \ref{ThmYoungNondecr}, \emph{for every pair of
continuous random variables} $Y,Z:[0,\infty)\rightarrow\lbrack0,\infty)$
\emph{with density} $\rho_{Y,Z},$ \emph{and every positive numbers} $b$
\emph{and} $c,$ \emph{the following inequality holds:}%
\[
P\left(  Y\leq b;Z\leq c\right)  \leq\int_{0}^{b}\left(  \int_{0}^{F_{X}%
(x)}\rho_{Y,Z}\left(  x,y\right)  dy\right)  dx+\int_{0}^{c}\left(  \int
_{0}^{Q_{X}(y)}\rho_{Y,Z}\left(  x,y\right)  dx\right)  dy.
\]
\emph{This can be seen as a principle of uncertainty, since it shows that the
functions}
\[
x\rightarrow\int_{0}^{F_{X}(x)}\rho_{Y,Z}\left(  x,y\right)  dy\text{ and
}y\rightarrow\int_{0}^{Q_{X}(y)}\rho_{Y,Z}\left(  x,y\right)  dx
\]
\emph{cannot be made simultaneously small.}
\end{remark}

\begin{remark}
$($The higher dimensional analogue of Theorem $1).$ \emph{Consider a locally
absolutely continuous} \emph{kernel} $K:\left[  0,\infty\right)
\times...\times\left[  0,\infty\right)  \longrightarrow\lbrack0,\infty
),\ K=K\left(  s_{1},s_{2},...,s_{n}\right)  ,$ \emph{and a family} $\phi
_{1},...,\phi_{n}:[a_{i},b_{i}]\rightarrow\mathbb{R}\ $\emph{of nondecreasing
functions defined on subintervals of} $\left[  0,\infty\right)  .$
\emph{Then}
\begin{multline*}
\int_{\phi_{1}\left(  a_{1}\right)  }^{\phi_{1}\left(  b_{1}\right)  }%
\int_{\phi_{2}\left(  a_{2}\right)  }^{\phi_{2}\left(  b_{2}\right)  }%
\cdots\int_{\phi_{n}\left(  a_{n}\right)  }^{\phi_{n}\left(  b_{n}\right)
}K\left(  s_{1},s_{2},...,s_{n}\right)  ds_{n}...ds_{2}ds_{1}\\
\leq%
{\displaystyle\sum\limits_{i=1}^{n}}
\int_{\phi_{i}\left(  a_{i}\right)  }^{\phi_{i}\left(  b_{i}\right)  }\left(
\int_{\phi_{1}\left(  a_{1}\right)  }^{\phi_{1}\left(  s\right)  }\cdots
\int_{\phi_{n}\left(  a_{n}\right)  }^{\phi_{n}\left(  s\right)  }K\left(
s_{1},...,s_{n}\right)  ds_{n}...ds_{i+1}ds_{i-1}...ds_{1}\right)  ds.\
\end{multline*}

\emph{The proof is based on mathematical induction (which is left to the
reader). The above inequality cover the n-variable generalization of Young's
inequality as obtained by Oppenheim \cite{O1927} (as well as the main result
in \cite{Pa1992}).}
\end{remark}

The following stronger version of Corollary \ref{CorContIncr} incorporates the
Legendre duality.

\begin{theorem}
\label{extYoung}Let $f:\left[  0,\infty\right)  \longrightarrow\left[
0,\infty\right)  $ be a continuous nondecreasing function and $\Phi
:[0,\infty)\rightarrow\mathbb{R}$ a convex function whose conjugate is also
defined on $[0,\infty)$. Then for all $b>a\geq0,$ $c\geq f(a),$ and
$\varepsilon>0$ we have
\begin{multline*}
\int_{a}^{b}\Phi\left(  \varepsilon\int_{f\left(  a\right)  }^{f\left(
x\right)  }K\left(  x,y\right)  dy\right)  dx+\int_{f(a)}^{c}\Phi^{\ast
}\left(  \frac{1}{\varepsilon}\int_{a}^{f_{\sup}^{-1}\left(  y\right)
}K\left(  x,y\right)  dx\right)  dx\\
\geq\int_{a}^{b}\int_{f\left(  a\right)  }^{c}K\left(  x,y\right)
dydx-(c-f(a))\Phi\left(  \varepsilon\right)  -(b-a)\Phi^{\ast}\left(
1/\varepsilon\right)  .
\end{multline*}

\end{theorem}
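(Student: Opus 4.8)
The plan is to reduce everything to Theorem~\ref{ThmYoungNondecr} together with a single pointwise use of the conjugacy inequality
\[
xy\le\Phi(x)+\Phi^{\ast}(y)\qquad(x,y\in[0,\infty)),
\]
which is immediate from the definition of the conjugate (it is also part $i)$ of Lemma~\ref{Lem1}).

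First I would fix the notation
\[
P(x)=\int_{f(a)}^{f(x)}K(x,y)\,dy\ \ (a\le x\le b),\qquad Q(y)=\int_{a}^{f_{\sup}^{-1}(y)}K(x,y)\,dx\ \ (f(a)\le y\le c),
\]
both nonnegative. A routine Fubini--Tonelli argument shows that $P$ and $Q$ are measurable (the relevant regions $\{(x,y):y\le f(x)\}$ and $\{(x,y):x\le f_{\sup}^{-1}(y)\}$ are Borel because $f$ is continuous and $f_{\sup}^{-1}$ is monotone), and that $\int_a^bP(x)\,dx<\infty$ and $\int_{f(a)}^cQ(y)\,dy<\infty$, by local integrability of $K$. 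Theorem~\ref{ThmYoungNondecr} is precisely the assertion that
\[
\int_{a}^{b}\int_{f(a)}^{c}K(x,y)\,dy\,dx\ \le\ \int_{a}^{b}P(x)\,dx+\int_{f(a)}^{c}Q(y)\,dy .
\]

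Next, for each fixed $x$ I would apply the conjugacy inequality to the pair $\bigl(\varepsilon P(x),1/\varepsilon\bigr)$, getting $P(x)\le\Phi\bigl(\varepsilon P(x)\bigr)+\Phi^{\ast}(1/\varepsilon)$, and for each fixed $y$ to the pair $\bigl(\varepsilon,Q(y)/\varepsilon\bigr)$, getting $Q(y)\le\Phi(\varepsilon)+\Phi^{\ast}\bigl(Q(y)/\varepsilon\bigr)$. Since $\Phi^{\ast}(0)<\infty$ the function $\Phi$ is bounded below on $[0,\infty)$, and likewise $\Phi^{\ast}(\cdot)\ge-\Phi(0)$ on $[0,\infty)$; hence $\Phi\bigl(\varepsilon P(\cdot)\bigr)$ and $\Phi^{\ast}\bigl(Q(\cdot)/\varepsilon\bigr)$ are bounded below and their integrals are well defined in $(-\infty,+\infty]$. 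Integrating the two pointwise inequalities over $[a,b]$ and over $[f(a),c]$ respectively and adding, I would obtain
\[
\int_{a}^{b}P(x)\,dx+\int_{f(a)}^{c}Q(y)\,dy\ \le\ \int_{a}^{b}\Phi\bigl(\varepsilon P(x)\bigr)\,dx+\int_{f(a)}^{c}\Phi^{\ast}\bigl(Q(y)/\varepsilon\bigr)\,dy+(b-a)\Phi^{\ast}(1/\varepsilon)+(c-f(a))\Phi(\varepsilon).
\]
Combining this with the inequality furnished by Theorem~\ref{ThmYoungNondecr} and transposing the last two constants to the left-hand side yields exactly the claimed estimate.

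I do not anticipate a genuine obstacle: once the substitutions $\bigl(\varepsilon P,1/\varepsilon\bigr)$ and $\bigl(\varepsilon,Q/\varepsilon\bigr)$ are spotted, the argument is a short chaining of inequalities. The only points that need a little care are the bookkeeping of the factors $\varepsilon$ and $1/\varepsilon$, the measurability and lower boundedness of the integrands, and the observation that the hypothesis ``$\Phi^{\ast}$ is defined on $[0,\infty)$'' is precisely what makes the constant $\Phi^{\ast}(1/\varepsilon)$ and the values $\Phi^{\ast}\bigl(Q(y)/\varepsilon\bigr)$ finite, so that the integrals in the statement are meaningful.
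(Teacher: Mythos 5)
Your proposal is correct and follows essentially the same route as the paper: the authors likewise apply the conjugacy inequality $\Phi(\varepsilon u)+\Phi^{\ast}(v/\varepsilon)\geq uv$ once with $u=\int_{f(a)}^{f(x)}K(x,y)\,dy,$ $v=1$ and once with $u=1,$ $v=\int_{a}^{f_{\sup}^{-1}(y)}K(x,y)\,dx,$ integrate, and then invoke Theorem~\ref{ThmYoungNondecr}. Your added remarks on measurability and lower boundedness are harmless extra care, not a deviation.
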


\begin{proof}
According to the Legendre duality,%
\begin{equation}
\Phi(\varepsilon u)+\Phi^{\ast}(v/\varepsilon)\geq uv\text{\quad for all
}u,v,\varepsilon\geq0. \label{fi}%
\end{equation}
For $u=\int_{f\left(  a\right)  }^{f\left(  x\right)  }K\left(  x,y\right)
dy$ and $v=1$ we get%
\[
\Phi\left(  \varepsilon\int_{f\left(  a\right)  }^{f\left(  x\right)
}K\left(  x,y\right)  dy\right)  +\Phi^{\ast}\left(  1/\varepsilon\right)
\geq\int_{f\left(  a\right)  }^{f\left(  x\right)  }K\left(  x,y\right)  dy,
\]
and by integrating both sides from $a$ to $b$ we obtain the inequality%
\[
\int_{a}^{b}\Phi\left(  \varepsilon\int_{f\left(  a\right)  }^{f\left(
x\right)  }K\left(  x,y\right)  dy\right)  dx+(b-a)\Phi^{\ast}\left(
1/\varepsilon\right)  \geq\int_{a}^{b}\left(  \int_{f\left(  a\right)
}^{f\left(  x\right)  }K\left(  x,y\right)  dy\right)  dx.
\]
In a similar manner, starting with $u=1$ and $v=\int_{a}^{f_{\sup}^{-1}\left(
y\right)  }K\left(  x,y\right)  dx,$ we arrive first at the inequality
\[
\Phi\left(  \varepsilon\right)  +\Phi^{\ast}\left(  \frac{1}{\varepsilon}%
\int_{a}^{f_{\sup}^{-1}\left(  y\right)  }K\left(  x,y\right)  dx\right)
\geq\int_{a}^{f_{\sup}^{-1}\left(  y\right)  }K\left(  x,y\right)  dx,
\]
and then to%
\begin{multline*}
(c-f(a))\Phi\left(  \varepsilon\right)  +\int_{f(a)}^{c}\Phi^{\ast}\left(
\frac{1}{\varepsilon}\int_{a}^{f_{\sup}^{-1}\left(  y\right)  }K\left(
x,y\right)  dx\right)  dx\\
\geq\int_{f\left(  a\right)  }^{c}\left(  \int_{a}^{f_{\sup}^{-1}\left(
y\right)  }K\left(  x,y\right)  dx\right)  dy.
\end{multline*}
Therefore,%
\begin{multline*}
\int_{a}^{b}\Phi\left(  \varepsilon\int_{f\left(  a\right)  }^{f\left(
x\right)  }K\left(  x,y\right)  dy\right)  dx+\int_{f(a)}^{c}\Phi^{\ast
}\left(  \frac{1}{\varepsilon}\int_{a}^{f_{\sup}^{-1}\left(  y\right)
}K\left(  x,y\right)  dx\right)  dx\\
\geq\int_{a}^{b}\left(  \int_{f\left(  a\right)  }^{f\left(  x\right)
}K\left(  x,y\right)  dy\right)  dx+\int_{f\left(  a\right)  }^{c}\left(
\int_{a}^{f_{\sup}^{-1}\left(  y\right)  }K\left(  x,y\right)  dx\right)  dy\\
-(b-a)\Phi^{\ast}\left(  1/\varepsilon\right)  -(c-f(a))\Phi\left(
\varepsilon\right)  .
\end{multline*}
According to Theorem \ref{ThmYoungNondecr},%
\begin{multline*}
\int_{a}^{b}\left(  \int_{f\left(  a\right)  }^{f\left(  x\right)  }K\left(
x,y\right)  dy\right)  dx+\int_{f\left(  a\right)  }^{c}\left(  \int
_{a}^{f_{\sup}^{-1}\left(  y\right)  }K\left(  x,y\right)  dx\right)  dy\\
\geq\int_{a}^{b}\int_{f\left(  a\right)  }^{c}K\left(  x,y\right)  dydx,
\end{multline*}
and the inequality in the statement of Theorem \ref{extYoung} is now clear.
\end{proof}

In the special case where $K\left(  x,y\right)  =1,$ $a=f\left(  a\right)  =0$
and $\Phi(x)=x^{p}/p$ (for some\emph{\ }$p>1$), Theorem \ref{extYoung} yields
the following inequality:%
\[
\int_{0}^{b}f^{p}\left(  x\right)  dx+\int_{0}^{c}\left(  f_{\sup}^{-1}\left(
y\right)  \right)  ^{p}dy\geq pbc-\left(  p-1\right)  \left(  b+c\right)
,\ \text{for every }b,c\geq0.
\]

This remark extends a result due to W. T. Sulaiman \cite{S}.

We end this section by noticing the following result that complements Theorem
\ref{ThmYoungNondecr}.

\begin{proposition}
\label{PropMerkle}Under the assumptions of Lemma \ref{Lem3},%
\begin{multline*}
\int_{a}^{b}\left(  \int_{f\left(  a\right)  }^{f\left(  x\right)  }K\left(
x,y\right)  dy\right)  dx+\int_{f\left(  a\right)  }^{c}\left(  \int
_{a}^{f_{\sup}^{-1}\left(  y\right)  }K\left(  x,y\right)  dx\right)  dy\\
\leq\max\left\{  \int_{a}^{b}\int_{f\left(  a\right)  }^{f\left(  b\right)
}K\left(  x,y\right)  dydx,\int_{a}^{f_{\sup}^{-1}\left(  c\right)  }%
\int_{f\left(  a\right)  }^{c}K\left(  x,y\right)  dydx\right\}  .
\end{multline*}
Assuming $K$ strictly positive almost everywhere, the equality occurs if and
only if $c=f\left(  b\right)  .$
\end{proposition}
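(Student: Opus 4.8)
The plan is to reduce Proposition \ref{PropMerkle} to a case analysis on the position of $c$ relative to $f(b-)$ and $f(b+)$, exactly paralleling the three cases in the proof of Theorem \ref{ThmYoungNondecr}, and in each case to recognize the left-hand side as one of the two quantities inside the $\max$ plus (or minus) a nonnegative remainder term. First I would introduce the shorthand $L$ for the left-hand side of the asserted inequality and, using Lemma \ref{Lem3} (which rewrites $L$ together with the ``missing'' piece as the full rectangle integral), observe that
\[
L=\int_{a}^{b}\int_{f(a)}^{f(x)}K\,dy\,dx+\int_{f(a)}^{c}\int_{a}^{f_{\sup}^{-1}(y)}K\,dx\,dy .
\]
The key geometric fact is that the region whose $\rho$-measure is $L$ is the union of $\operatorname{hyp}f|_{[a,b]}$ with the ``sub-level'' rectangle-like region $\{(x,y):a\le x\le f_{\sup}^{-1}(y),\ f(a)\le y\le c\}$, and these two pieces overlap in a set of positive measure unless $c=f(b)$; the $\max$ on the right simply dominates whichever of the two natural enclosing regions (the rectangle $[a,b]\times[f(a),f(b)]$ or the rectangle $[a,f_{\sup}^{-1}(c)]\times[f(a),c]$) is larger.

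The main work is the case split. \textbf{Case 1:} $f(a)\le c\le f(b-)$. Here $f_{\sup}^{-1}(c)\le b$, and I would split the outer $x$-integral of the first term at $f_{\sup}^{-1}(c)$ exactly as in the proof of Theorem \ref{ThmYoungNondecr}; the terms reassemble into $\int_{a}^{f_{\sup}^{-1}(c)}\int_{f(a)}^{c}K\,dy\,dx$ plus $\int_{f_{\sup}^{-1}(c)}^{b}\int_{f(a)}^{f(x)}K\,dy\,dx$. The first of these is the second quantity inside the $\max$; the second is nonnegative but I must instead show $L$ is bounded by that quantity, so in fact I should bound the extra term by observing that on $[f_{\sup}^{-1}(c),b]$ we have $f(x)\ge c$, hence this leftover is part of $\int_{f_{\sup}^{-1}(c)}^{b}\int_{f(a)}^{f(b)}K$, and together with $\int_{a}^{f_{\sup}^{-1}(c)}\int_{f(a)}^{c}K$ everything fits inside $\int_{a}^{b}\int_{f(a)}^{f(b)}K$ — the first quantity inside the $\max$. \textbf{Case 2:} $c\ge f(b+)$. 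Now $f_{\sup}^{-1}(c)\ge b$, and the same rearrangement (as in Figure \ref{fig2}) expresses $L$ as $\int_{a}^{f_{\sup}^{-1}(c)}\int_{f(a)}^{c}K\,dy\,dx$ minus a nonnegative remainder, so $L\le\int_{a}^{f_{\sup}^{-1}(c)}\int_{f(a)}^{c}K$, the second quantity inside the $\max$. \textbf{Case 3:} $c\in(f(b-),f(b+))$. Then $f_{\sup}^{-1}(c)=b$ and both rectangles inside the $\max$ coincide with $[a,b]\times[f(a),c]$; since $L$ equals that rectangle's measure (by Lemma \ref{Lem3} together with the fact that the epigraph piece above $y=c$ contributes nothing once $f_{\sup}^{-1}$ has flattened), the inequality holds with equality.

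For the equality statement, with $K>0$ a.e., equality in Case 1 forces $\int_{f_{\sup}^{-1}(c)}^{b}\int_{c}^{f(b)}K=0$, i.e. $f_{\sup}^{-1}(c)=b$, which together with $c\le f(b-)$ gives $c=f(b-)=f(b)$ at points of continuity; equality in Case 2 similarly forces the remainder to vanish, giving $c=f(b+)=f(b)$; and Case 3 gives equality only in the degenerate overlap, again $c=f(b)$ when $f$ is continuous at $b$ (and more precisely $c\in[f(b-),f(b+)]$ in general). I expect the main obstacle to be purely bookkeeping: keeping the four iterated integrals with their shifting limits of integration straight through the rearrangement, and making sure the ``leftover'' region in each case is correctly identified as a subset of one of the two enclosing rectangles so that the $\max$ genuinely dominates. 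Once the picture (Figures \ref{fig1}–\ref{fig3}) is in hand, each inequality is a one-line consequence of additivity of $\rho$ over disjoint regions and positivity of $K$.
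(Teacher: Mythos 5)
Your argument is correct in substance and rests on the same two ingredients as the paper's proof: the additivity identity of Lemma \ref{Lem3} and monotonicity of the measure under enlargement of the region of integration. The paper organizes the computation more economically, with only two cases: for $c<f(b)$ it completes the second term of the left-hand side to the full epigraph integral $\int_{f(a)}^{f(b)}\bigl(\int_a^{f_{\sup}^{-1}(y)}K\,dx\bigr)dy$ by adding and subtracting the piece over $[c,f(b)]$, applies Lemma \ref{Lem3} to recognize the first quantity in the $\max$, and discards the nonnegative subtracted piece; the case $c\geq f(b)$ is treated symmetrically. Your route instead recycles the three-case decomposition from the proof of Theorem \ref{ThmYoungNondecr} and then enlarges each resulting piece into the appropriate rectangle; this works, at the cost of extra bookkeeping. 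One slip to correct in your Case 3: when $c\in(f(b-),f(b+))$ the two quantities inside the $\max$ are $\int_a^b\int_{f(a)}^{f(b)}K\,dy\,dx$ and $\int_a^b\int_{f(a)}^{c}K\,dy\,dx$, and these do \emph{not} coincide unless $c=f(b)$; your conclusion survives because $L$ equals the second of them and is therefore at most the $\max$, but the claim that the two rectangles coincide is false as stated. On the equality assertion, both you and the paper leave the ``only if'' direction essentially unverified; your parenthetical remark that discontinuity of $f$ at $b$ complicates matters is well taken, since in Case 3 equality actually holds for every $c\in[f(b),f(b+))$, so the criterion $c=f(b)$ should be understood under continuity of $f$ at $b$.
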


\begin{proof}
If $c<f\left(  b\right)  $, then from Lemma \ref{Lem3} we infer that%
\begin{multline*}
\int_{a}^{b}\left(  \int_{f\left(  a\right)  }^{f\left(  x\right)  }K\left(
x,y\right)  dy\right)  dx+\int_{f\left(  a\right)  }^{c}\left(  \int
_{a}^{f_{\sup}^{-1}\left(  y\right)  }K\left(  x,y\right)  dx\right)  dy\\
=\int_{a}^{b}\left(  \int_{f\left(  a\right)  }^{f\left(  x\right)  }K\left(
x,y\right)  dy\right)  dx+\int_{f\left(  a\right)  }^{f\left(  b\right)
}\left(  \int_{a}^{f_{\sup}^{-1}\left(  y\right)  }K\left(  x,y\right)
dx\right)  dy\\
-\int_{c}^{f\left(  b\right)  }\left(  \int_{a}^{f_{\sup}^{-1}\left(
y\right)  }K\left(  x,y\right)  dx\right)  dy\\
\leq\int_{a}^{b}\int_{f\left(  a\right)  }^{f\left(  b\right)  }K\left(
x,y\right)  dydx
\end{multline*}
The other case, $c\geq f\left(  b\right)  $, has a similar approach.
\end{proof}

Proposition \ref{PropMerkle} extends a result due to M. J. Merkle \cite{Me}.

\section{The precision in Young's inequality}

The main result of this section is as follows:

\begin{theorem}
\label{ThmPrec}Under the assumptions of Lemma \ref{Lem3}, for all $b\geq
a\geq0$ and $c\geq f(a),$%
\begin{multline*}
\int_{a}^{b}\left(  \int_{f\left(  a\right)  }^{f\left(  x\right)  }K\left(
x,y\right)  dy\right)  dx+\int_{f\left(  a\right)  }^{c}\left(  \int
_{a}^{f_{\sup}^{-1}\left(  y\right)  }K\left(  x,y\right)  dx\right)  dy\\
-\int_{a}^{b}\int_{f\left(  a\right)  }^{c}K\left(  x,y\right)  dydx\leq
\left\vert \int_{f_{\sup}^{-1}\left(  c\right)  }^{b}\int_{c}^{f\left(
b\right)  }K\left(  x,y\right)  dydx\right\vert \text{.}%
\end{multline*}
Assuming $K$ strictly positive almost everywhere, the equality occurs if and
only if $c=f\left(  b\right)  $.
\end{theorem}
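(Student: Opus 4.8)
The plan is to compute the left-hand side exactly, using Lemma \ref{Lem3} to eliminate the double integral $\int_a^b\int_{f(a)}^c K\,dy\,dx$, and then to recognize what remains as (the absolute value of) a single rectangular integral over the region trapped between the graph of $f$ and the horizontal line $y=c$ on the $x$-interval between $f_{\sup}^{-1}(c)$ and $b$. Concretely, I would split into the two cases $c\le f(b)$ and $c>f(b)$, exactly as in the proof of Theorem \ref{ThmYoungNondecr}, since the sign of $b-f_{\sup}^{-1}(c)$ flips between them and this is what the absolute value on the right is absorbing.

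\textbf{Case $f(a)\le c\le f(b-)$.} Here $f_{\sup}^{-1}(c)\le b$. From the computation already carried out in the proof of Theorem \ref{ThmYoungNondecr}, the quantity
\[
\int_{a}^{b}\Bigl(\int_{f(a)}^{f(x)}K\,dy\Bigr)dx+\int_{f(a)}^{c}\Bigl(\int_{a}^{f_{\sup}^{-1}(y)}K\,dx\Bigr)dy-\int_{a}^{b}\int_{f(a)}^{c}K\,dy\,dx
\]
equals precisely $\int_{f_{\sup}^{-1}(c)}^{b}\bigl(\int_{c}^{f(x)}K(x,y)\,dy\bigr)dx$. Since $f(x)\le f(b)$ for $x<b$ and $K\ge 0$, this is bounded above by $\int_{f_{\sup}^{-1}(c)}^{b}\int_{c}^{f(b)}K(x,y)\,dy\,dx$, which is nonnegative and hence equals its own absolute value; that is exactly the right-hand side. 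Equality forces $\int_{f_{\sup}^{-1}(c)}^{b}\int_{f(x)}^{f(b)}K\,dy\,dx=0$, so with $K>0$ a.e.\ one needs $f_{\sup}^{-1}(c)=b$, i.e.\ $c=f(b-)=f(b)$ by continuity.

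\textbf{Case $c\ge f(b+)$.} Now $f_{\sup}^{-1}(c)\ge b$, and the same bookkeeping (again reusing the identities from the proof of Theorem \ref{ThmYoungNondecr}, where the roles of the limits of integration are reversed) shows the difference on the left equals $-\int_{b}^{f_{\sup}^{-1}(c)}\bigl(\int_{f(x)}^{c}K(x,y)\,dy\bigr)dx\le 0$, while the right-hand side is $\bigl|\int_{b}^{f_{\sup}^{-1}(c)}\int_{f(b)}^{c}K\,dy\,dx\bigr|$; the nonpositive left side is trivially dominated by this nonnegative right side, and as before equality with $K>0$ a.e.\ forces $f_{\sup}^{-1}(c)=b$, hence $c=f(b+)=f(b)$. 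The borderline subcase $c\in(f(b-),f(b+))$, which for continuous $f$ is empty, would give $f_{\sup}^{-1}(c)=b$ and both sides vanish.

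The step I expect to require the most care is the exact evaluation of the left-hand side via Lemma \ref{Lem3}: one must keep straight which sub-rectangle of $[a,b]\times[f(a),c]$ is counted by which term, and in the second case handle the fact that the $y$-integral runs past $f(b)$ so that $f_{\sup}^{-1}(y)>b$ for $y>f(b+)$ — this is where the cancellation that produces the clean single term $\int_{f_{\sup}^{-1}(c)}^{b}\int_{c}^{f(b)}K$ (with its sign) comes from. Once that identity is in hand, the inequality is just monotonicity of the integral of a nonnegative function over a smaller versus larger rectangle, and the equality analysis is immediate from strict positivity of $K$.
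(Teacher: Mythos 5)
Your first case is sound and is exactly the paper's argument: the difference equals $\int_{f_{\sup}^{-1}(c)}^{b}\bigl(\int_{c}^{f(x)}K\,dy\bigr)dx$, which is then enclosed in the rectangle $[f_{\sup}^{-1}(c),b]\times[c,f(b)]$. The genuine gap is in your second case, and it sits precisely at the step you yourself flagged as delicate: your exact evaluation of the left-hand side has the wrong sign. For $c\geq f(b+)$ the difference is \emph{nonnegative}; indeed, for $y>f(b)$ the second term integrates $x$ out to $f_{\sup}^{-1}(y)\geq b$, so it covers the whole strip $[a,b]\times[f(b),c]$ of the rectangle \emph{and} an excess region to the right of $x=b$, and it is this excess that survives, with a plus sign:
\[
\int_{a}^{b}\Bigl(\int_{f(a)}^{f(x)}K\,dy\Bigr)dx+\int_{f(a)}^{c}\Bigl(\int_{a}^{f_{\sup}^{-1}(y)}K\,dx\Bigr)dy-\int_{a}^{b}\int_{f(a)}^{c}K\,dy\,dx
=\int_{b}^{f_{\sup}^{-1}(c)}\Bigl(\int_{f(x)}^{c}K\,dy\Bigr)dx\ \geq 0,
\]
not $-\int_{b}^{f_{\sup}^{-1}(c)}\bigl(\int_{f(x)}^{c}K\,dy\bigr)dx\leq 0$ as you assert. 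A concrete check: $f(x)=x$, $K\equiv1$, $a=0$, $b=1$, $c=2$ gives $\tfrac12+2-2=+\tfrac12$, whereas your formula gives $-\tfrac12$. Consequently the inequality in this case is not the triviality ``nonpositive is dominated by nonnegative''; one genuinely needs $f(x)\geq f(b)$ on $[b,f_{\sup}^{-1}(c)]$ to enclose the excess region in the rectangle $[b,f_{\sup}^{-1}(c)]\times[f(b),c]$, which is the comparison the paper actually makes. The repair is local and easy, but as written your identity is false and the ``trivially dominated'' step hides the real content of the theorem in that case; it also vitiates your equality analysis there, since equality no longer forces both sides to vanish.

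A smaller point: in both cases your equality discussion invokes continuity of $f$ (``$c=f(b-)=f(b)$ by continuity''), which is not among the hypotheses of Lemma \ref{Lem3}. Moreover, $\int_{f_{\sup}^{-1}(c)}^{b}\int_{f(x)}^{f(b)}K\,dy\,dx=0$ does not by itself force $f_{\sup}^{-1}(c)=b$ (take $f$ constant equal to $f(b)$ on $[f_{\sup}^{-1}(c),b]$). The paper's own treatment of the equality case is not more careful on this score, but you should not present the implication as automatic.
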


\begin{proof}
The case where $f\left(  a\right)  \leq c\leq f\left(  b-\right)  $ is
illustrated in Figure \ref{fig4}. The left-hand side of the inequality in the
statement of Theorem \ref{ThmPrec} represents the measure of the cross-hatched
curvilinear trapezium, while right-hand side is the measure of the $ABCD$
rectangle.%
\begin{figure}
[h]
\begin{center}
\includegraphics[
height=5.4696cm,
width=6.9787cm
]%
{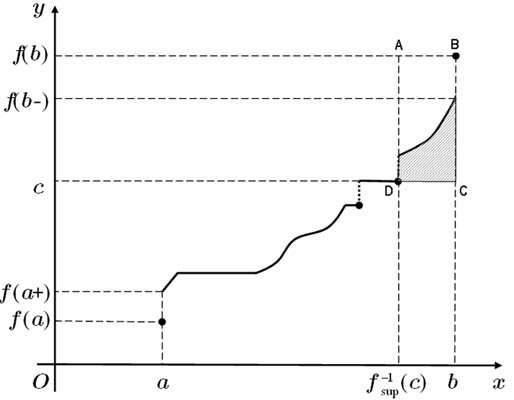}%
\caption{The geometry of the case $f\left(  a\right)  \leq c\leq f\left(
b-\right)  .$}%
\label{fig4}%
\end{center}
\end{figure}

Therefore,%
\begin{multline*}
\int_{a}^{b}\left(  \int_{f\left(  a\right)  }^{f\left(  x\right)  }K\left(
x,y\right)  dy\right)  dx+\int_{f\left(  a\right)  }^{c}\left(  \int
_{a}^{f_{\sup}^{-1}\left(  y\right)  }K\left(  x,y\right)  dx\right)  dy\\
-\int_{a}^{b}\int_{f\left(  a\right)  }^{c}K\left(  x,y\right)  dydx=\int
_{f_{\sup}^{-1}\left(  c\right)  }^{b}\left(  \int_{c}^{f\left(  x\right)
}K\left(  x,y\right)  dy\right)  dx\\
\leq\int_{f_{\sup}^{-1}\left(  c\right)  }^{b}\int_{c}^{f\left(  b\right)
}K\left(  x,y\right)  dydx.
\end{multline*}
The equality holds if and only if $\int_{f_{\sup}^{-1}\left(  c\right)  }%
^{b}\left(  \int_{c}^{f\left(  x\right)  }K\left(  x,y\right)  dy\right)
dx=0,$ that is, when $f\left(  b-\right)  =c.$

The case where $c\geq f\left(  b+\right)  $ is similar to the precedent one.
The first term will be:%
\begin{multline*}
\int_{a}^{b}\left(  \int_{f\left(  a\right)  }^{f\left(  x\right)  }K\left(
x,y\right)  dy\right)  dx+\int_{f\left(  a\right)  }^{c}\left(  \int
_{a}^{f_{\sup}^{-1}\left(  y\right)  }K\left(  x,y\right)  dx\right)  dy\\
-\int_{a}^{b}\int_{f\left(  a\right)  }^{c}K\left(  x,y\right)  dydx=\int
_{b}^{f_{\sup}^{-1}\left(  c\right)  }\left(  \int_{f\left(  b\right)
}^{f\left(  x\right)  }K\left(  x,y\right)  dy\right)  dx\\
\leq\int_{b}^{f_{\sup}^{-1}\left(  c\right)  }\int_{f\left(  b\right)  }%
^{c}K\left(  x,y\right)  dydx.
\end{multline*}
Equality holds if and only if $\int_{b}^{f_{\sup}^{-1}\left(  c\right)  }%
\int_{f\left(  b\right)  }^{c}K\left(  x,y\right)  dydx=0,\ $so we must have
$f\left(  b+\right)  =c$.

The case where $c\in\left[  f\left(  b-\right)  ,f\left(  b+\right)  \right]
$ is trivial, both sides of our inequality being equal to zero.
\end{proof}

\begin{corollary}
\label{CorMing}\emph{(}E. Minguzzi\emph{\ }\cite{M}\emph{)}. If moreover
$K\left(  x,y\right)  =1$ on $\left[  0,\infty\right)  \times\left[
0,\infty\right)  $, and $f$ is continuous and increasing, then
\[
\int_{a}^{b}f\left(  x\right)  dx+\int_{f\left(  a\right)  }^{c}f^{-1}\left(
y\right)  dy\ -bc+af\left(  a\right)  \leq\left(  f^{-1}\left(  c\right)
-b\right)  \cdot\left(  c-f\left(  b\right)  \right)  .
\]
The equality occurs if and only if $c=f\left(  b\right)  $.
\end{corollary}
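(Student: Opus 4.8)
\textbf{Proof proposal for Corollary \ref{CorMing}.}

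The plan is to specialize Theorem \ref{ThmPrec} to the case $K\equiv 1$ with $f$ continuous and increasing, and then identify each of the four integral expressions with the elementary quantities appearing in Minguzzi's inequality. First I would observe that when $f$ is continuous and increasing we have $f(b-)=f(b+)=f(b)$ and $f_{\sup}^{-1}=f^{-1}$, so the pseudo-inverse becomes the genuine inverse and the exceptional interval $[f(b-),f(b+)]$ collapses to the single point $f(b)$; this already tells us the equality statement will read ``$c=f(b)$'', matching the claim. With $K\equiv 1$ the inner integrals evaluate by Fubini/elementary calculus: $\int_{f(a)}^{f(x)} dy = f(x)-f(a)$, so the first double integral becomes $\int_a^b (f(x)-f(a))\,dx = \int_a^b f(x)\,dx - (b-a)f(a)$; similarly $\int_a^{f^{-1}(y)} dx = f^{-1}(y)-a$, so the second becomes $\int_{f(a)}^c f^{-1}(y)\,dy - (c-f(a))a$; and the subtracted term $\int_a^b\int_{f(a)}^c dy\,dx = (b-a)(c-f(a))$.

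Next I would add these three contributions together and simplify the algebra. The constant terms combine as $-(b-a)f(a) - (c-f(a))a - (b-a)(c-f(a))$, which after expansion collapses neatly to $af(a) - bc$ (the cross terms cancel: $-bf(a)+af(a) - ca + af(a) + \dots$, a short routine computation). Hence the left-hand side of Theorem \ref{ThmPrec} becomes exactly $\int_a^b f(x)\,dx + \int_{f(a)}^c f^{-1}(y)\,dy - bc + af(a)$, which is precisely the left-hand side of the corollary. For the right-hand side, again with $K\equiv 1$, the bound $\left|\int_{f^{-1}(c)}^b \int_c^{f(b)} dy\,dx\right|$ equals $\left|(b-f^{-1}(c))(f(b)-c)\right| = (f^{-1}(c)-b)(c-f(b))$, since whichever sign the two factors $b-f^{-1}(c)$ and $f(b)-c$ take, they agree (both are $\le 0$ when $c\ge f(b)$ and both are $\ge 0$ when $c\le f(b)$, by monotonicity of $f$ and $f^{-1}$), so the product $(f^{-1}(c)-b)(c-f(b))$ is nonnegative and equals the absolute value. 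That yields exactly the stated inequality, and the equality clause transfers directly from Theorem \ref{ThmPrec} once we note $f(b-)=f(b)=f(b+)$.

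I do not anticipate a genuine obstacle here, since the corollary is a pure specialization; the only mild care needed is the bookkeeping in the cancellation of the constant terms and the sign analysis showing $\left|(b-f^{-1}(c))(f(b)-c)\right| = (f^{-1}(c)-b)(c-f(b))$ rather than its negative. If one wanted to be maximally explicit one could split into the cases $c\le f(b)$ and $c>f(b)$ and verify the sign of each factor via monotonicity, but this is routine and can be left to the reader or dispatched in a single sentence.
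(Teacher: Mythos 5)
Your proposal is correct and follows exactly the route the paper intends: Corollary \ref{CorMing} is stated as a direct specialization of Theorem \ref{ThmPrec} to $K\equiv 1$ with $f$ continuous and increasing, and your bookkeeping (the constants collapsing to $af(a)-bc$, the sign analysis showing the absolute value equals $(f^{-1}(c)-b)(c-f(b))$, and the equality case reducing to $c=f(b)$) checks out.
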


More accurate bounds can be indicated under the presence of convexity.

\begin{corollary}
\label{CorJP}Let $f$ be a nondecreasing continuous function, which is convex
on the interval $\left[  \min\left\{  f_{\sup}^{-1}\left(  c\right)
,b\right\}  ,\max\left\{  f_{\sup}^{-1}\left(  c\right)  ,b\right\}  \right]
$. Then:%
\begin{multline*}
i)~\int_{a}^{b}\left(  \int_{f\left(  a\right)  }^{f\left(  x\right)
}K\left(  x,y\right)  dy\right)  dx+\int_{f\left(  a\right)  }^{c}\left(
\int_{a}^{f_{\sup}^{-1}\left(  y\right)  }K\left(  x,y\right)  dx\right)  dy\\
-\int_{a}^{b}\int_{f\left(  a\right)  }^{c}K\left(  x,y\right)  dydx\\
\leq\int_{f_{\sup}^{-1}\left(  c\right)  }^{b}\int_{c}^{c+\frac{f(b)-c}%
{b-f_{\sup}^{-1}\left(  c\right)  }(x-f_{\sup}^{-1}\left(  c\right)
)}K\left(  x,y\right)  dydx\text{,\quad for every }c\leq f\left(  b\right)  ;
\end{multline*}%
\begin{multline*}
ii)~\int_{a}^{b}\left(  \int_{f\left(  a\right)  }^{f\left(  x\right)
}K\left(  x,y\right)  dy\right)  dx+\int_{f\left(  a\right)  }^{c}\left(
\int_{a}^{f_{\sup}^{-1}\left(  y\right)  }K\left(  x,y\right)  dx\right)  dy\\
-\int_{a}^{b}\int_{f\left(  a\right)  }^{c}K\left(  x,y\right)  dydx\\
\geq\int_{b}^{f_{\sup}^{-1}\left(  c\right)  }\int_{f\left(  b\right)
}^{f(b)+\frac{c-f(b)}{f_{\sup}^{-1}\left(  c\right)  -b}(x-b)}K\left(
x,y\right)  dydx\text{,\quad for every }c\geq f\left(  b\right)  .
\end{multline*}
If $\ f$ is concave on the aforementioned interval, then the inequalities
above work in the reverse way.

Assuming $K$ strictly positive almost everywhere, the equality occurs if and
only if$\ f$ is an affine function or $f\left(  b\right)  =c$.

\begin{proof}
We will restrict here to the case of convex functions, the argument for the
concave functions being similar.

The left-hand side term of each of the inequalities in our statement
represents the measure of the cross-hatched surface. See Figure 5 and Figure 6.

\medskip%
\raisebox{-0cm}{\parbox[b]{5.9638cm}{\begin{center}
\includegraphics[
height=4.9446cm,
width=5.9638cm
]%
{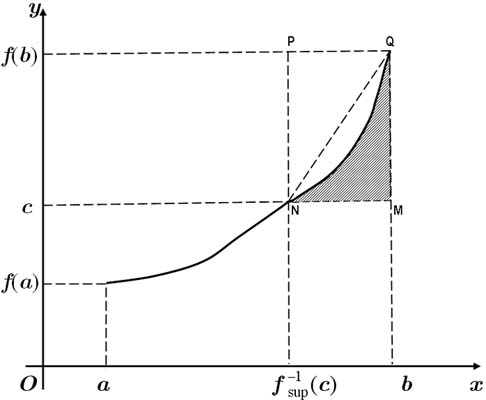}%
\\
Figure 5. The geometry of the case $c\leq f\left(  b\right)  .$%
\end{center}}}%
\raisebox{-0cm}{\parbox[b]{5.9594cm}{\begin{center}
\includegraphics[
height=4.9776cm,
width=5.9594cm
]%
{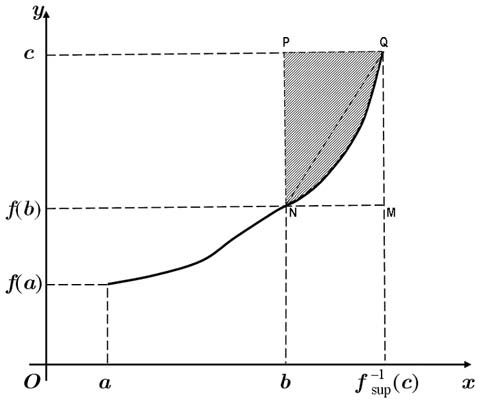}%
\\
Figure 6. The geometry of the case $c\geq f\left(  b\right)  .$%
\end{center}}}%
\qquad\medskip

As the points of the graph of the convex function $f$ (restricted to the
interval of endpoints $b$ and $f_{\sup}^{-1}\left(  c\right)  )$ are under the
chord joining $\left(  b,f\left(  b\right)  \right)  $ and $\left(  f_{\sup
}^{-1}\left(  c\right)  ,c\right)  ,$ it follows that this measure is less
than the measure of the enveloping triangle $MNQ$ when $c\leq f(b).$ This
yields $i)$. The assertion $ii)$ follows in a similar way.
\end{proof}
\end{corollary}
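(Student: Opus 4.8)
The plan is to reduce Corollary~\ref{CorJP} to Theorem~\ref{ThmPrec} by a geometric comparison argument, treating the convex and concave cases in parallel and the two sub-cases $c\le f(b)$ and $c\ge f(b)$ separately. First I would recall from the proof of Theorem~\ref{ThmPrec} the exact identity
\[
\int_{a}^{b}\left(\int_{f(a)}^{f(x)}K\right)dx+\int_{f(a)}^{c}\left(\int_{a}^{f_{\sup}^{-1}(y)}K\right)dy-\int_{a}^{b}\int_{f(a)}^{c}K\,dydx=\int_{f_{\sup}^{-1}(c)}^{b}\left(\int_{c}^{f(x)}K(x,y)\,dy\right)dx,
\]
valid when $c\le f(b-)$ (with the analogous identity, integrating from $b$ to $f_{\sup}^{-1}(c)$ and from $f(b)$ to $f(x)$, when $c\ge f(b+)$; the case $c\in[f(b-),f(b+)]$ is trivial since both sides vanish). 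So the left-hand side of $i)$ is exactly the $\rho$-measure of the region $\{(x,y): f_{\sup}^{-1}(c)\le x\le b,\ c\le y\le f(x)\}$, the ``curvilinear triangle'' between the graph of $f$ and the horizontal line $y=c$.

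Next I would invoke convexity of $f$ on the interval with endpoints $b$ and $f_{\sup}^{-1}(c)$: the graph of $f$ lies below the chord through $(f_{\sup}^{-1}(c),c)$ and $(b,f(b))$. When $c\le f(b)$ this chord is the line $y=c+\frac{f(b)-c}{b-f_{\sup}^{-1}(c)}\bigl(x-f_{\sup}^{-1}(c)\bigr)$, so the curvilinear region is contained in the triangle $\{(x,y): f_{\sup}^{-1}(c)\le x\le b,\ c\le y\le c+\frac{f(b)-c}{b-f_{\sup}^{-1}(c)}(x-f_{\sup}^{-1}(c))\}$; since $K\ge0$, monotonicity of the integral over this containment gives exactly inequality $i)$. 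For $ii)$, when $c\ge f(b)$ one has $f_{\sup}^{-1}(c)\ge b$, the roles of the endpoints swap, and the curvilinear region $\{b\le x\le f_{\sup}^{-1}(c),\ f(b)\le y\le f(x)\}$ now \emph{contains} the triangle under the chord $y=f(b)+\frac{c-f(b)}{f_{\sup}^{-1}(c)-b}(x-b)$ (because below the chord means the triangle sits inside the region bounded above by the graph), yielding the reverse-direction bound $ii)$. The concave case is verbatim the same with all graph-versus-chord inclusions reversed, hence both displayed inequalities reverse.

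For the equality discussion, assuming $K>0$ a.e., equality in the containment-of-regions step forces the symmetric difference between the curvilinear region and the enveloping (resp. enveloped) triangle to have Lebesgue measure zero; since $f$ is continuous this symmetric difference is the open region strictly between the graph and the chord, which has measure zero precisely when the graph coincides with the chord on $[\min\{b,f_{\sup}^{-1}(c)\},\max\{b,f_{\sup}^{-1}(c)\}]$, i.e. $f$ is affine there, or when that interval is degenerate, i.e. $f_{\sup}^{-1}(c)=b$, which (by continuity and the definition of $f_{\sup}^{-1}$) is equivalent to $f(b)=c$. The only genuine subtlety — the ``hard part'' — is bookkeeping the direction of the inclusion as $c$ crosses $f(b)$: for $c\le f(b)$ the curvilinear triangle is the \emph{smaller} set and sits inside the chordal triangle, while for $c\ge f(b)$ it is the \emph{larger} set and the chordal triangle sits inside it; getting the inequality sense right in each of the four combinations (convex/concave $\times$ $c\lessgtr f(b)$) is where care is needed, but each individual step is just monotonicity of $\int\!K$ over nested regions, which is why the figures carry most of the weight.
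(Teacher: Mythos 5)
Your treatment of part $i)$ is essentially the paper's argument and is sound: for $c\le f(b-)$ the left-hand side equals $\rho$ of the curvilinear region $\{f_{\sup}^{-1}(c)\le x\le b,\ c\le y\le f(x)\}$, convexity puts the graph below the chord, so this region sits inside the chordal triangle and monotonicity of the integral of $K\ge 0$ over nested sets gives $i)$.

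Your argument for $ii)$, however, contains two errors that do not cancel. First, the identity you quote for $c\ge f(b+)$ is wrong: the excess is not $\int_b^{f_{\sup}^{-1}(c)}\bigl(\int_{f(b)}^{f(x)}K\,dy\bigr)dx$ (the region \emph{below} the graph) but $\int_b^{f_{\sup}^{-1}(c)}\bigl(\int_{f(x)}^{c}K\,dy\bigr)dx$ (the region \emph{above} the graph and below $y=c$). Test with $K\equiv 1$, $a=f(a)=0$, $f(x)=x^{2}$, $b=1$, $c=4$: the left-hand side is $\tfrac13+\tfrac{16}{3}-4=\tfrac53=\int_1^2(4-x^2)\,dx$, whereas your region has measure $\int_1^2(x^2-1)\,dx=\tfrac43$. (The paper's own proof of Theorem \ref{ThmPrec} makes the same slip, but there it is harmless because both regions lie in the bounding rectangle.) Second, your containment is reversed: since the graph of a convex $f$ lies below the chord, the set $\{f(b)\le y\le f(x)\}$ is \emph{contained in} the triangle $\{f(b)\le y\le \mathrm{chord}(x)\}$, not the other way around; the parenthetical "below the chord means the triangle sits inside the region bounded above by the graph" states the inclusion backwards. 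The correct geometric route is: the excess equals $\rho$ of $\{b\le x\le f_{\sup}^{-1}(c),\ f(x)\le y\le c\}$, which by convexity contains the region $\{\mathrm{chord}(x)\le y\le c\}$, so the left-hand side is bounded below by the $\rho$-measure of the \emph{upper} triangle. That is not literally the lower triangle written in the statement of $ii)$; the two coincide when $K$ is constant (which is why the Jak\v{s}eti\'{c}--Pe\v{c}ari\'{c} special case comes out right), but for general $K$ an additional argument is needed, and neither your write-up nor the paper's supplies it. As written, your proof does not establish $ii)$, and the equality discussion inherits the same defect in that case.
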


Corollary \ref{CorJP} extends a result due to J. Jak\v{s}eti\'{c} and J. E.
Pe\v{c}ari\'{c}\textit{\ \cite{P}.} They considered the special case were
$K\left(  x,y\right)  =1$ on $\left[  0,\infty\right)  \times\left[
0,\infty\right)  $ and $f:\left[  0,\infty\right)  \rightarrow\left[
0,\infty\right)  $ is increasing and differentiable, with an increasing
derivative on the interval $\left[  \min\left\{  f^{-1}\left(  c\right)
,b\right\}  ,\max\left\{  f^{-1}\left(  c\right)  ,b\right\}  \right]  $ and
$f(0)=0.$ In this case the conclusion of Corollary \ref{CorJP} reads as
follows:%
\begin{align*}
i)\text{ }\int_{0}^{b}f\left(  x\right)  dx+\int_{0}^{c}f^{-1}\left(
y\right)  dy\ -bc  &  \leq\frac{1}{2}\left(  f^{-1}\left(  c\right)
-b\right)  \left(  c-f\left(  b\right)  \right)  \ \text{for }c<f\left(
b\right)  ;\\
ii)\text{ }\int_{0}^{b}f\left(  x\right)  dx+\int_{0}^{c}f^{-1}\left(
y\right)  dy\ -bc  &  \geq\frac{1}{2}\left(  f^{-1}\left(  c\right)
-b\right)  \left(  c-f\left(  b\right)  \right)  \ \text{for }c>f\left(
b\right)  .
\end{align*}

The equality holds if $f\left(  b\right)  =c$ or $f$ is an affine function.
The inequality sign should be reversed if $f$ has a decreasing derivative on
the interval
\[
\left[  \min\left\{  f^{-1}\left(  c\right)  ,b\right\}  ,\max\left\{
f^{-1}\left(  c\right)  ,b\right\}  \right]  .
\]

\section{The connection with $c$-convexity}

Motivated by the mass transportation theory, several people \cite{D1988},
\cite{EN1974} drew a parallel to the classical theory of convex functions by
extending the Legendre duality. Technically, given two compact metric spaces
$X$ and $Y$ and a \emph{cost density} function $c:X\times Y\rightarrow
\mathbb{R}$ (which is supposed to be continuous), we may consider the
following generalization of the notion of convex function:

\begin{definition}
\label{cConv}A function $F:X\rightarrow\mathbb{R}$ is $c$-convex if there
exists a function $G:Y\rightarrow\mathbb{R}$ such that
\begin{equation}
F(x)=\sup_{y\in Y}\left\{  c(x,y)-G(y)\right\}  ,\;\text{for all }x\in X.
\label{c-conv}%
\end{equation}

\end{definition}

We abbreviate (\ref{c-conv}) by writing $F=G^{c}$. A useful remark is the
equality%
\[
F^{cc}=F,
\]
that is,%
\begin{equation}
F(x)=\sup_{y\in Y}\left\{  c(x,y)-F^{c}(y)\right\}  ,\;\text{for all }x\in X.
\label{cdual}%
\end{equation}

The classical notion of convex function corresponds to the case where $X$ is a
compact interval and $c(x,y)=xy$. The details can be found in \cite{NP2006},
pp. 40-42.

Theorem \ref{ThmYoungNondecr} illustrates the theory of $c$-convex functions
for the spaces $X=[a,\infty]$, $Y=[f(a),\infty]$ (the Alexandrov one point
compactification of $[a,\infty)$ and respectively $[f(a),\infty)$), and the
cost function
\begin{equation}
c(x,y)=\int_{a}^{x}\int_{f(a)}^{y}K\left(  s,t\right)  dtds\text{.}
\label{cKrelation}%
\end{equation}
In fact, under the hypotheses of this theorem, the functions%
\[
F(x)=\int_{a}^{x}\left(  \int_{f\left(  a\right)  }^{f\left(  s\right)
}K\left(  s,t\right)  dt\right)  ds,\quad x\geq a,
\]
and%
\[
G(y)=\int_{f\left(  a\right)  }^{y}\left(  \int_{a}^{f_{\sup}^{-1}\left(
t\right)  }K\left(  s,t\right)  ds\right)  dt,\quad y\geq f(a),
\]
verify the relations $F^{c}=G$ and $G^{c}=F$ (due to the equality case as
specified in the statement of Theorem \ref{ThmYoungNondecr}, so they are both
$c$-convex.

On the other hand, a simple argument shows that $F$ and $G$ are also convex in
the usual sense.

Let us call the functions $c$ that admits a representation of the form
(\ref{cKrelation}) with $K\in L^{1}(\mathbb{R\times R}),$ \emph{absolutely}
\emph{continuous in the hyperbolic sense}. With this terminology, Theorem
\ref{ThmYoungNondecr} can be rephrased as follows:

\begin{theorem}
\label{ThmcConv}Suppose that $c:[a,b]\times\lbrack A,B]\rightarrow\mathbb{R}$
is an absolutely continuous function in the hyperbolic sense with mixed
derivative $\frac{\partial^{2}c}{\partial x\partial y}\geq0,$ and
$f:[a,b]\rightarrow\lbrack A,B]$ is a nondecreasing function such that
$f(a)=A.$ Then%
\begin{equation}
c(x,y)-c(a,f(a))\leq\int_{a}^{x}\frac{\partial c}{\partial t}(t,f(t))dt+\int
_{f(a)}^{y}\frac{\partial c}{\partial s}(f_{\sup}^{-1}(s),s)ds \label{cyineq}%
\end{equation}
for all $(x,y)\in\lbrack a,A]\times\lbrack b,B]$.

If $\frac{\partial^{2}c}{\partial x\partial y}>0$ almost everywhere,\ then
(\ref{cyineq}) becomes an equality if and only if $y\in\left[
f(x-),f(x+)\right]  ;$ here we made the convention $f(a-)=f(a)$ and
$f(b+)=f(b).$
\end{theorem}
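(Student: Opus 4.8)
The statement is essentially a translation of Theorem~\ref{ThmYoungNondecr} into the language of the cost density $c$, so the plan is to show that the three groups of integrals appearing in \eqref{cyineq} are exactly the three groups appearing in Theorem~\ref{ThmYoungNondecr} once we substitute $K=\frac{\partial^{2}c}{\partial x\partial y}$ and use $A=f(a)$. First I would record that, since $c$ is absolutely continuous in the hyperbolic sense, we may write $K(s,t)=\frac{\partial^{2}c}{\partial s\partial t}(s,t)\geq 0$, and that by the fundamental theorem of calculus (applied in each variable separately) one has the three identities
\[
c(x,y)-c(a,f(a))=\int_{a}^{x}\int_{f(a)}^{y}K(s,t)\,dt\,ds,
\]
\[
\int_{f(a)}^{f(t)}K(t,s)\,ds=\frac{\partial c}{\partial t}(t,f(t)),\qquad
\int_{a}^{f_{\sup}^{-1}(s)}K(t,s)\,dt=\frac{\partial c}{\partial s}\bigl(f_{\sup}^{-1}(s),s\bigr),
\]
the latter two following from $\frac{\partial c}{\partial t}(t,y)=\int_{f(a)}^{y}K(t,s)\,ds$ (by definition of $c$, differentiating under the integral sign in the first slot) evaluated at $y=f(t)$, and symmetrically in the other variable.

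**Key steps in order.** (1)~Justify the differentiation-under-the-integral-sign identities above; this is where the absolute continuity hypothesis is used, so that $\frac{\partial c}{\partial t}$ and $\frac{\partial c}{\partial s}$ exist a.e.\ and are given by those partial integrals. (2)~Substitute these three identities into the inequality of Theorem~\ref{ThmYoungNondecr} (with the same $a$, $b$, and with $c$ there replaced by $y$, and $K$ as above): the left-hand side $\int_{a}^{b}\int_{f(a)}^{y}K\,dt\,ds$ becomes $c(b,y)-c(a,f(a))$ — wait, more precisely one applies the theorem on $[a,x]$ with upper bound $y$, so it becomes $c(x,y)-c(a,f(a))$ — and the two terms on the right become exactly the two integrals $\int_{a}^{x}\frac{\partial c}{\partial t}(t,f(t))\,dt$ and $\int_{f(a)}^{y}\frac{\partial c}{\partial s}(f_{\sup}^{-1}(s),s)\,ds$. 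This yields \eqref{cyineq}. (3)~For the equality case, note that $\frac{\partial^{2}c}{\partial x\partial y}>0$ a.e.\ is precisely the hypothesis ``$K$ strictly positive almost everywhere'' of Theorem~\ref{ThmYoungNondecr}, so the equality condition $y\in[f(x-),f(x+)]$ transfers verbatim; the boundary conventions $f(a-)=f(a)$, $f(b+)=f(b)$ are needed only to make the closed-interval statement correct at the endpoints, and I would check that the monotonicity of $f$ together with $f(a)=A$ makes the degenerate cases ($x=a$ or $y=A$) consistent.

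**Main obstacle.** The only genuinely delicate point is step~(1): one must be careful that ``absolutely continuous in the hyperbolic sense'' (i.e.\ $c(x,y)=\int_a^x\int_{f(a)}^y K$ with $K\in L^1$) really does license writing $K=\frac{\partial^2 c}{\partial x\partial y}$ a.e.\ and, more to the point, that the \emph{first-order} partial $\frac{\partial c}{\partial t}(t,y)$ equals $\int_{f(a)}^{y}K(t,s)\,ds$ for a.e.\ $t$ (for each fixed $y$), so that its evaluation along the curve $y=f(t)$ is meaningful and measurable — here one uses Fubini to see that for a.e.\ $t$ the slice $s\mapsto K(t,s)$ is integrable, and that $t\mapsto\int_{f(a)}^{f(t)}K(t,s)\,ds$ is the a.e.-derivative of $t\mapsto\int_a^t\bigl(\int_{f(a)}^{f(\tau)}K(\tau,s)\,ds\bigr)d\tau$. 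Once these measure-theoretic identifications are in place, the rest is a direct substitution into Theorem~\ref{ThmYoungNondecr} and carries no further difficulty.
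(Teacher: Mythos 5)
Your proposal is correct and follows essentially the same route as the paper, which presents Theorem~\ref{ThmcConv} explicitly as a rephrasing of Theorem~\ref{ThmYoungNondecr} under the identification $K=\frac{\partial^{2}c}{\partial x\partial y}$, with the first-order partials recovered by integrating $K$ in one variable (the paper records exactly the measure-theoretic facts you flag in your step~(1) in the paragraph following the theorem statement). The substitution and the transfer of the equality case are just as you describe, so there is nothing further to add.
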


Necessarily, an absolutely continuous function $c$ in the hyperbolic sense, is
continuous. It admits partial derivatives of the first order and a mixed
derivative $\frac{\partial^{2}c}{\partial x\partial y}$ almost everywhere.
Besides, the functions $y\rightarrow\frac{\partial c}{\partial x}(x,y)$ and
$x\rightarrow\frac{\partial c}{\partial y}(x,y)$ are defined everywhere in
their interval of definition and represent absolutely continuous functions;
they are also nondecreasing provided that $\frac{\partial^{2}c}{\partial
x\partial y}\geq0$ almost everywhere.

A special case of Theorem \ref{ThmcConv} was proved by Zs. P\'{a}les
\cite{Pa1990}, \cite{Pa1992} (assuming $c:[a,A]\times\lbrack b,B]\rightarrow
\mathbb{R}$ a continuously differentiable function with nondecreasing
derivatives $y\rightarrow\frac{\partial c}{\partial x}(x,y)$ and
$x\rightarrow\frac{\partial c}{\partial y}(x,y),$ and $f:[a,b]\rightarrow
\lbrack A,B]$ an increasing homeomorphism). An example which escapes his
result but is covered by Theorem \ref{ThmcConv} is offered by the function%
\[
c(x,y)=\int_{0}^{x}\left\{  \frac{1}{s}\right\}  ds\int_{0}^{y}\left\{
\frac{1}{t}\right\}  dt,\,\quad x,y\geq0,
\]
where $\left\{  \frac{1}{s}\right\}  $ denotes the fractional part of
$\frac{1}{s}$ if $s>0,$ and $\left\{  \frac{1}{s}\right\}  =0$ if $s=0$.
According to Theorem \ref{ThmcConv},%
\begin{multline*}
\int_{0}^{x}\left\{  \frac{1}{s}\right\}  ds\int_{0}^{y}\left\{  \frac{1}%
{t}\right\}  dt\\
\leq\int_{0}^{x}\left(  \left\{  \frac{1}{s}\right\}  \int_{0}^{f(s)}\left\{
\frac{1}{t}\right\}  dt\right)  ds+\int_{0}^{y}\left(  \left\{  \frac{1}%
{t}\right\}  \int_{0}^{f_{\sup}^{-1}(t)}\left\{  \frac{1}{s}\right\}
ds\right)  dt,
\end{multline*}
for every nondecreasing function $f:[0,\infty)\rightarrow\lbrack0,\infty)$
such that $f(0)=0.$

\medskip

\noindent\textbf{Acknowledgement.} The authors were supported by CNCSIS Grant
PN2 ID\_$420.$

\end{document}